\newcommand{\whp}{{whp}}
\newcommand{\codeg}{\overline{\deg}}
\begin{document}
\newtheorem{theorem}{Theorem}[section]
\newtheorem{lemma}[theorem]{Lemma}
\newtheorem{definition}[theorem]{Definition}
\newtheorem{conjecture}[theorem]{Conjecture}
\newtheorem{proposition}[theorem]{Proposition}
\newtheorem{algorithm}[theorem]{Algorithm}
\newtheorem{corollary}[theorem]{Corollary}
\newtheorem{observation}[theorem]{Observation}
\newtheorem{problem}[theorem]{Open Problem}
\newtheorem{remark}[theorem]{Remark}
\newcommand{\noin}{\noindent}
\newcommand{\ind}{\indent}
\newcommand{\om}{\omega}
\newcommand{\I}{\mathcal I}
\newcommand{\N}{{\mathbb N}}
\newcommand{\LL}{\mathbb{L}}
\newcommand{\R}{{\mathbb R}}
\newcommand{\Z}{{\mathbb Z}}

\newcommand{\E}{\mathbb E}
\newcommand{\Prob}{\mathbb{P}}
\newcommand{\pr}{\mathbb{P}}
\newcommand{\eps}{\varepsilon}
\newcommand{\G}{{\mathcal{G}}}
\newcommand{\Bin}{\mathrm{Bin}}
\newcommand{\Po}{\mathrm{Po}}

\newcommand{\cF}{{\mathcal F}}

\newcommand{\RG} {\ensuremath{\mathcal G(n,r)}}
\newcommand{\RGuv} {\ensuremath{\widetilde{\mathcal G}(n,r)}}
\newcommand{\SR} {\ensuremath{\mathcal S_n}}
\newcommand{\SRn} {\ensuremath{\mathcal S_n}}

\newcommand{\x}{{\mathbf x}}
\newcommand{\y}{{\mathbf y}}
\newcommand{\z}{{\mathbf z}}

\newcommand{\be}{{\mathbf e}}
\newcommand{\bu}{{\mathbf u}}
\newcommand{\bv}{{\mathbf v}}
\newcommand{\bw}{{\mathbf w}}

%%%%%%%%%%%%%%%%%%%%%%%%%

\title{Clique colourings of geometric graphs}

\author{Colin McDiarmid}
\address{Department of Statistics, Oxford University, 24 - 29 St Giles, Oxford OX1 3LB, UK}
\email{\tt cmcd@stats.ox.ac.uk}

\author{Dieter Mitsche}
\address{Universit\'{e} de Nice Sophia-Antipolis, Laboratoire J-A Dieudonn\'{e}, Parc Valrose, 06108 Nice cedex 02}
\email{\texttt{dmitsche@unice.fr}}

\author{Pawe\l{} Pra\l{}at}
\address{Department of Mathematics, Ryerson University, Toronto, ON, Canada}
\email{\tt pralat@ryerson.ca}

\date{30 June 2017, v3}

\keywords{geometric graphs, random geometric graphs, clique chromatic number}
\thanks{This work was initiated at the Centre de Recerca Matematica (CRM) in Barcelona within the ``Research programme on Algorithmic Perspective in Economics and Physics''. The research of the third author is supported in part by NSERC and Ryerson University.}
\subjclass{05C80, %Random graphs
05C15, %Coloring of graphs and hypergraphs
05C35. %Extremal problems
}

\maketitle

\begin{abstract}
A \emph{clique colouring} of a graph is a colouring of the vertices such that no maximal clique is monochromatic (ignoring isolated vertices). The least number of colours in such a colouring is the \emph{clique chromatic number}.  Given $n$ points $\x_1, \ldots,\x_n$ in the plane, and a threshold $r>0$, the corresponding geometric graph has vertex set $\{v_1,\ldots,v_n\}$, and distinct $v_i$ and $v_j$ are adjacent when the Euclidean distance between $\x_i$ and $\x_j$ is at most $r$. We investigate the clique chromatic number of such graphs.

We first show that the clique chromatic number is at most 9 for any geometric graph in the plane, and briefly consider geometric graphs in higher dimensions. Then we study the asymptotic behaviour of the clique chromatic number for the random geometric graph $\RG$ in the plane, where $n$ random points are independently and uniformly distributed in a suitable square. We see that as $r$ increases from 0, with high probability the clique chromatic number is 1 for very small $r$, then 2 for small $r$, then at least 3 for larger $r$, and finally drops back to 2.
\end{abstract}

%%%%%%%%%%%%%%%%%%%%%%%%%%%%

\section{Introduction and main results}\label{sec:intro}

In this section we introduce clique colourings and geometric graphs; and we present our main results, on clique colourings of deterministic and random geometric graphs.

Recall that a \emph{proper colouring} of a graph is a labeling of its vertices with colours such that no two vertices sharing the same edge have the same colour; and the smallest number of colours in a proper colouring of a graph $G=(V,E)$ is its \emph{chromatic number}, denoted by $\chi(G)$.
 
We are concerned here with another notion of vertex colouring. A \emph{clique} $S \subseteq V$ is a subset of the vertex set such that each pair of vertices in $S$ is connected by an edge; and a clique is \emph{maximal} if it is not a proper subset of another clique. A \emph{clique colouring} of a graph $G$ is a colouring of the vertices such that no maximal clique is monochromatic, ignoring isolated vertices.  The least number of colours in such a colouring is the \emph{clique chromatic number} of $G$, denoted by $\chi_c(G)$. (If $G$ has no edges we take $\chi_c(G)$ to be 1.)  Clearly, $\chi_c(G) \le \chi(G)$ but it is possible for $\chi_c(G)$ to be much smaller than $\chi(G)$. For example, for any $n \geq 2$ we have $\chi(K_n) = n$ but $\chi_c(K_n) = 2$.  Note that if $G$ is triangle-free then $\chi_c(G)=\chi(G)$. 

A standard example of a hypergaph arising from a graph $G$ is the hypergraph $H$ with vertex set $V(G)$ and edges the vertex sets of the maximal cliques.  A clique-colouring of $G$ is exactly the standard hypergraph colouring of $H$, that is, colouring the vertices so that no edge is monochromatic.
%\m{\CC Give example of `application' of clique-colouring (or, failing that, of hypergraph colouring)}
\smallskip

For several graph classes the maximum clique chromatic number is known to be 2 or 3. For maximum value 2 we have for example: comparability graphs~\cite{Duffus+},  claw-free perfect graphs~\cite{BGGPS2004}, odd-hole and co-diamond free graphs~\cite{Defossez}, claw-free planar graphs~\cite{SLK2014}, powers of cycles (other than odd cycles longer than three, which need three colours)~\cite{Campos}, and claw-free graphs with maximum degree at most $7$ (again, except for odd cycles of length more than three)~\cite{Liang}. For maximum value 3 we have for example: planar graphs~\cite{Mohar}, co-comparability graphs~\cite{Duffus+}, circular-arc graphs (see~\cite{Cerioli}) and generalised split graphs (see~\cite{Gravier}).  Further related results can be found in~\cite{Andreae},~\cite{Gravier} and~\cite{Klein}. It was believed for some time that perfect graphs had bounded clique chromatic number, perhaps with maximum value 3 (see~\cite{Duffus+} or for example~\cite{JensenToft}); but it was shown very recently that in fact such clique chromatic numbers are unbounded~\cite{cptt2006}. The behaviour of the clique chromatic number for the binomial (known also as Erd\H{o}s-R\'{e}nyi) random graph $\G(n,p)$ is investigated in~\cite{mmp-gnp} and~\cite{ak-gnp}.

On the algorithmic side, it is known that testing whether $\chi_c(G)=2$ for a planar graph can be performed in polynomial time~\cite{Kratochvil}, but deciding whether $\chi_c(G)=2$ is $NP$-hard for perfect graphs~\cite{Kratochvil} and indeed for $K_4$-free perfect graphs~\cite{Defossez}, and for graphs with maximum degree $3$~\cite{BGGPS2004}; see also~\cite{marx11}.
%\m{\CC ref to Marx ok?}
\smallskip

We are interested here primarily in clique colourings of geometric graphs in the plane, but we shall also briefly consider geometric graphs in $\R^d$ for any positive integer $d$.  Given $n$ points $\x_1,\ldots,\x_n$ in $\R^d$  and given a threshold distance $r>0$, the corresponding {(Euclidean) geometric graph} has vertex set $\{v_1,\ldots,v_n\}$, and for $i \neq j$, vertices $v_i$ and $v_j$ are adjacent when the Euclidean distance $d(\x_i, \x_j) \leq r$. We call a graph $G$ \emph{geometric} or \emph{geometric in} $\R^d$ if there are points $\x_j$ and $r>0$ realising $G$ as above.  By rescaling by a factor $1/r$ we may assume, without loss of generality, that $r=1$. A geometric graph in $\R^2$ is also called a \emph{unit disk graph}.
%\m{Colin: hardness results known for clique-col geometric graphs?}\m{D: I couldn't find anything on google. Maybe not known}

\smallskip

Our first theorem shows that the clique chromatic number is uniformly bounded for geometric graphs in the plane.  (In contrast, Bacs\'{o} et al.~\cite{BGGPS2004} observed that $\chi_c(G)$ is unbounded even for line graphs of complete graphs, and recall that $\chi_c(G)$ is unbounded for perfect graphs.)

\begin{theorem}\label{thm.detR2}
If $G$ is a geometric graph in the plane $\R^2$ then $\chi_c(G) \le 9$.
\end{theorem}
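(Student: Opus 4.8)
The plan is to build a fixed ``scaffold'' colouring from a square grid and then repair it locally. Rescale so that $r=1$, and partition the plane into axis-parallel cells of side $1/\sqrt2$, so that each cell has diameter exactly $1$ and hence the points lying in a single cell always form a clique. Colour the cells periodically with period $3$ in each coordinate, using the $9$ colours indexed by $\mathbb{Z}_3\times\mathbb{Z}_3$, and give each point the colour of its cell. Two cells receiving the same colour differ by a multiple of $3$ in some coordinate, so the gap between them is at least $2\cdot(1/\sqrt2)=\sqrt2>1$; hence two points of the same colour lying in different cells are never adjacent.

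I would first use this to reduce the whole problem to a single cell. Since same-coloured points in distinct cells are non-adjacent, any two adjacent same-coloured vertices lie in a common cell, so every monochromatic clique is contained in one cell $C$. Because $C$ is itself a clique, any point of $C$ outside a clique $S\subseteq C$ is adjacent to all of $S$; thus the only clique contained in $C$ that can be maximal in $G$ is the set of all points of $C$, and this occurs precisely when no vertex outside $C$ is adjacent to every point of $C$ (call such a cell \emph{exposed}). Consequently the scaffold colouring already avoids monochromatic maximal cliques except possibly at exposed cells containing at least two points, and these are the only cells needing repair.

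To repair an exposed cell $C$, I would recolour one carefully chosen vertex $p\in C$ with a colour absent from the neighbourhood of $p$, processing the exposed cells greedily in a fixed order and, at each step, forbidding the colours currently present among the neighbours of $p$. The geometric leverage is that exposedness confines these neighbours: no vertex is adjacent to all of $C$, so the neighbours of $p$ must avoid the intersection $\bigcap_{x\in C}\overline{D}(x,1)$ of the unit disks about the points of $C$, and I would aim to show that what remains of $\overline{D}(p,1)$ meets at most eight of the nine colour classes, leaving a free colour $c^{*}$ for $p$. Recolouring $p$ to $c^{*}$ then makes the former full clique of $C$ bichromatic; the remaining colour-$c(C)$ points of $C$ are all dominated by $p$ and so form no maximal clique; and $p$, having no same-coloured neighbour at the moment it is recoloured, lies in no monochromatic clique of size at least two.

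The main obstacle is exactly this confinement estimate. For a diametral pair $u,v$ of $C$ the forbidden region is a substantial lens and the neighbours of $u$ lie in a thin crescent pointing away from $v$, which meets only a few cells and hence few colours; but for a larger exposed cluster the excluded set $\bigcap_{x\in C}\overline{D}(x,1)$ shrinks, the neighbours can fill most of $\overline{D}(p,1)$, and it is genuinely delicate to guarantee that some choice of $p$ (for instance an extreme vertex of $C$) still sees at most eight colours. Once that is secured, the greedy ordering takes care of interactions between nearby repairs, since two recoloured vertices with the same new colour cannot be adjacent. A secondary point to check is the borderline grid geometry --- points at distance exactly $1$ and cells meeting only at a corner --- but this affects only constants, not the value $9$.
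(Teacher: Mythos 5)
Your scaffold and the reduction to ``exposed'' cells are sound, but the proof is incomplete precisely at its load-bearing step, and that step is doubtful as stated. You need: for every exposed cell $C$ there is a vertex $p\in C$ whose neighbours occupy at most eight of the nine colour classes. You offer no proof of this, and the raw geometry works against you. With cells of side $1/\sqrt2$, the unit circle about a point $p$ placed centrally in its cell already crosses all eight cells surrounding $p$'s cell in the $3\times3$ block; worse, if $p$ lies near a corner of its cell, the disk $\overline{D}(p,1)$ spans four cell-columns and four cell-rows, and a $4\times4$ block of cells under the period-$3$ colouring contains all nine colours (it can even contain two distinct cells of the same colour, both meeting $\overline{D}(p,1)$). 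So any free colour must come entirely from the exposedness constraint, i.e.\ from showing that $\overline{D}(p,1)\setminus\bigcap_{x\in C}\overline{D}(x,1)$ misses a colour class --- which is exactly what you concede is ``genuinely delicate'' for clusters $C$ of more than two points, where the excluded lens shrinks. A second, compounding gap is the greedy repair itself: your eight-colour estimate (even if true) counts only scaffold colours, but once repairs begin, $N(p)$ may contain previously recoloured vertices carrying colours unrelated to the cell geometry, so the forbidden set at a later repair can exceed eight and the pigeonhole fails; the remark that ``the greedy ordering takes care of interactions'' presupposes the very lemma that is missing, now in a stronger form robust to earlier repairs.

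For contrast, the paper gets $9$ without any local repair, by a one-dimensional rather than two-dimensional decomposition: partition the plane into horizontal strips of height $y$ with $\tfrac12<y<\tfrac{\sqrt3}2$. Within a strip, ordering non-adjacent pairs by $x$-coordinate gives a strict partial order whose comparability graph is the complement, so each strip induces a co-comparability graph, and the theorem of Duffus, Kierstead and Trotter yields $\chi_c\le 3$ per strip. Colouring the strips cyclically $a,b,c$ with three disjoint $3$-colour palettes gives $9$ colours, and a monochromatic maximal clique can neither span two strips (as $2y>1$) nor sit inside one (by the per-strip clique colouring). Note also that the unrepaired version of your grid idea is essentially the paper's Proposition~\ref{thm.detRd}, which in the plane gives $2(k+1)^2=18$, not $9$: the gain from $18$ to $9$ in the paper comes from replacing ``each cell is a clique, use two colours per cell class'' by the structural co-comparability input, not from a repair argument. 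If you want to salvage your route, the missing confinement lemma is where all the work lies, and you should expect to need a careful choice of $p$ (e.g.\ extreme in $C$ in a direction determined by the empty lens) together with a colour-counting argument that survives the presence of already-repaired neighbours.
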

Let $\chi_c^{\max}(\R^d)$ denote the maximum value of $\chi_c(G)$ over geometric graphs $G$ in $\R^d$.  Clearly $\chi_c^{\max}(\R^2)$ is at least 3 (consider $C_5$) so we have $ 3 \leq \chi_c^{\max}(\R^2) \leq 9$: it would be interesting to improve these bounds.  In Section~\ref{sec.detproofs} we shall see that more generally $\chi_c^{\max}(\R^d)$ is finite for each~$d$, but (perhaps unsurprisingly) $\chi_c^{\max}(\R^d) \to \infty$ as $d \to \infty$; and we shall see further related deterministic results.

\smallskip

For random geometric graphs the upper bound in Theorem~\ref{thm.detR2} can often be improved. Given a positive integer $n$ and a threshold distance $r>0$, we consider the \emph{random geometric graph} $G \in \RG$ on vertex set $V=\{v_1,\ldots,v_n\}$ obtained as before by starting with $n$ random points sampled independently and uniformly in the square $\SR = \left[-\sqrt{n}/2,\sqrt{n}/2\right]^2$, see~\cite{Penrose}.  (We could equally work with the unit square $[0,1]^2$.) Note that, with probability $1$, no point in $\SR$ is chosen more than once, so we may identify each vertex $v \in V$ with its corresponding geometric position $\bv = (v_x,v_y) \in \SRn$. The (usual) chromatic number of $\RG$ was studied in~\cite{cmcd2003,ColinTobias}, see also~\cite{Penrose}.

We say that events $A_n$ hold \emph{with high probability} (\emph{\whp}) if the probability that $A_n$ holds tends to $1$ as $n$ goes to infinity.  Also, we use $\log$ to denote natural logarithm.  It is known that the value $r_c = r_c(n)=\sqrt{(\log n)/\pi}$ is a sharp threshold function for connectivity for $G \in \RG$ (see, for example, \cite{Penrose97,Goel05}). This means that for every $\varepsilon>0$, if $r\le(1-\varepsilon)r_c$, then $G$ is disconnected \whp, whilst if $r\ge (1+\varepsilon)r_c$, then $G$ is connected \whp. 

\smallskip

The next two results summarise what we know about the clique chromatic number $\chi_c$ of a random geometric graph $G$ in the plane; but first here is an overview. As $r$ increases from 0 we have \whp\ the following rough picture: $\chi_c(G)$ is 1 up to about $n^{-1/2}$, then 2 up to about $n^{-1/8}$, then at least 3 (and at most $\chi_c^{\max}(\R^2) \leq 9$) up to about $\sqrt{\log n}$ (roughly the connectivity threshold), when it drops back to 2 and remains there.  

\begin{theorem}\label{thm.RGGsummary}
For the random geometric graph $G \in \RG$ in the plane:
\begin{enumerate}
\item if $nr^2 \to 0$ then $\chi_c=1$ \whp,
\item if $nr^2 \to c$ then $\pr(\chi_c=1) \to e^{-(\pi/2) c}$ and $\pr(\chi_c=2) \to 1- e^{-(\pi/2)c}$,
\item if  $nr^2 \to \infty$ and $nr^8 \to 0$ then $\chi_c=2$ \whp,
\item if  $nr^8 \to c$ then $\pr(\chi_c=2) \to e^{- \mu c}$ and $\pr(\chi_c=3) \to 1- e^{- \mu c}$, for a suitable constant $\mu =\mu(C_5)  >0$ (see below),
\item if  $nr^8 \to \infty$ and  $r \leq 0.46 \sqrt{\log n}$ then $\chi_c \geq 3$ \whp,
\item  if $r \geq 9.27 \, \sqrt{\log n}$ then $\chi_c=2$ \whp.
\end{enumerate}
\end{theorem}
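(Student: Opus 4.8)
The plan is to establish the two bounds $\chi_c(G)\ge 2$ and $\chi_c(G)\le 2$ separately, each \whp. The lower bound is immediate: when $r\ge 9.27\sqrt{\log n}$ the graph has edges \whp\ (indeed it lies well above the connectivity threshold $r_c=\sqrt{(\log n)/\pi}$ and so is connected \whp), and any graph with an edge has $\chi_c\ge 2$. All the work is in the upper bound, which I would prove by producing one explicit $2$-colouring that \whp\ leaves no maximal clique monochromatic.

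For the colouring I would partition $\SR$ into axis-parallel square cells of side $s=\sqrt{(1+\eps)\log n}$, colour the cells black and white in a checkerboard pattern, and give each vertex the colour of its cell. A first-moment bound shows that \whp\ every cell meeting $\SR$ is nonempty, since there are $O(n/\log n)$ cells and each is empty with probability $n^{-(1+\eps)+o(1)}$. Granting this, it suffices to prove the structural fact that \whp\ every maximal clique is \emph{fat}: its convex hull has minimum width at least $c\,r$ for an absolute constant $c>0$ (one can take $c$ close to $\sqrt3/2$). Indeed, since $c\,r\ge c\cdot 9.27\sqrt{\log n}$ exceeds several times $s$, a convex set of this width contains complete cells of both colours; as these cells are nonempty and lie inside the clique's defining region, the clique then receives both colours.

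The heart of the matter is this structural lemma, and I would prove it from two ingredients. First, a deterministic observation: for a maximal clique $C$ let $R=\bigcap_{v\in C}D(v,r)$ be the common intersection of the closed $r$-disks about its vertices. A vertex is adjacent to every vertex of $C$ exactly when it lies in $R$, so maximality forces $C=V\cap R$; in particular \emph{every} vertex lying in $R$ already belongs to $C$. Second, a probabilistic observation: \whp\ there is no empty disk of radius $\rho_0=\Theta(\sqrt{\log n})$ inside $\SR$, which follows by a first-moment bound over an $O(n)$-point net of candidate centres, a disk of area $\pi\rho_0^{2}\ge(2+\eps)\log n$ being empty with probability $n^{-(2+\eps)+o(1)}$. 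The point of routing the argument through this net is that we never have to union-bound over the maximal cliques themselves, of which there may be very many.

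To combine them I would argue by contradiction. Suppose a maximal clique $C$ had width less than $c\,r$, so that $C$ lies in a strip of that width; let $u,w\in C$ realise $d=\mathrm{diam}(C)\le r$, place the midpoint of $uw$ at the origin with $uw$ horizontal, and take an apex $p^{*}=(0,h)$ with $h$ a little below $\sqrt{r^{2}-d^{2}/4}$. One checks that a disk of radius $\rho_0$ about $p^{*}$ lies within distance $r$ of every vertex of $C$, hence inside $R$; yet its clearance above the strip, roughly $(\sqrt3/2-c)\,r$, is positive and exceeds $\rho_0$, so this disk is disjoint from the strip containing $C$. Any vertex of the disk would lie in $R$ and hence in $C$, contradicting that $C$ sits in the strip; therefore the disk is empty, contradicting the second ingredient. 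This is exactly where the constant $9.27$ is used: it is chosen generously so that, for every $d\le r$, the apex clearance $(\sqrt3/2-c)\,r$ comfortably exceeds $\rho_0$ while $\rho_0$ is still large enough ($\pi\rho_0^{2}\gg 2\log n$) for the empty-disk estimate to beat the union bound. The main obstacle is precisely this geometric lemma — ruling out thin maximal cliques and balancing these competing constraints on the constants — together with the minor extra bookkeeping for cliques meeting $\partial\SR$, where the apex is simply taken pointing into the square and the reduced number of points only helps.
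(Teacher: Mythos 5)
You have proved (at most) Part (6) of the theorem. The statement has six parts, and parts (1)--(5) require entirely different machinery which your proposal does not touch: parts (1)--(4) rest on Poisson-approximation counts of small components (the paper's Lemma~\ref{lem.comps}, from Penrose) combined with the deterministic fact that graphs on at most $5$ vertices satisfy $\chi_c \le 2$ unless they contain a $C_5$-component (Lemma~\ref{lem.detsmall}), so that the threshold $nr^8$ for $5$-vertex components governs the jump from $2$ to $3$; and part (5), for $r$ up to $0.46\sqrt{\log n}$, needs the paper's pentagon construction (Lemma~\ref{lem.RGG2}), in which an induced $C_5$ with no edge in a triangle is planted by controlling an explicit region of area $A \approx 4.6334$, the constant $0.46$ being precisely (just under) $A^{-1/2}$. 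None of this can be recovered from your argument, so as a proof of the stated theorem the proposal has a major scope gap.

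Within Part (6) itself, your route (checkerboard cells, a ``fat maximal clique'' lemma via an empty-disk net) is genuinely different from the paper's, but the geometry does not close at the constant $9.27$. First, the fatness constant cannot be ``close to $\sqrt{3}/2$'': if the clique lies in a strip of width $c\,r$ and the strip extends depth $t \le c\,r$ on the apex's side of the line $uw$, a clique vertex at depth $t$ is at distance about $\sqrt{d^2/4+(h+t)^2}$ from the apex $(0,h)$, which exceeds $r$ when $h \approx \sqrt{r^2-d^2/4}$ and $t$ is a positive fraction of $r$; to keep the whole $\rho_0$-disk inside $R=\bigcap_{v}D(v,r)$ you must lower the apex by roughly $t+\rho_0$, and the surviving clearance forces $c$ below roughly $\sqrt{3}/4$ to $1/\sqrt{3}$ (minus $O(\rho_0/r)$), not $\sqrt{3}/2$. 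Second, the cell side $s$ cannot be taken below about $\sqrt{\log n}$, since the union bound over $\Theta(n/s^2)$ cells needs $e^{-s^2}\le n^{-1+o(1)}$; a convex body of width $c\,r$ is only guaranteed (Blaschke) to contain a disk of radius $c\,r/3 \approx 1.3\sqrt{\log n}$ at $r=9.27\sqrt{\log n}$, which is too small to contain full cells of both checkerboard colours (that needs radius about $2.1 s$). So your argument proves Part (6) only for $r \ge K\sqrt{\log n}$ with $K$ around $15$--$20$, which does not cover the stated range. The paper closes the constant by placing its cell at the \emph{midpoint} of the diameter pair rather than at the apex: every clique point lies within $\sqrt{3}D/2$ of the midpoint, each hexagonal cell \whp\ contains at least two points and is coloured with both colours, and maximality absorbs the whole midpoint cell once $\sqrt{3}r/2+\delta \le r$ --- the slack $(1-\sqrt{3}/2)r$ is exactly what yields $(1+\sqrt{3}/2)2^{7/2}3^{-3/4} \approx 9.2615$. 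The midpoint placement also makes the boundary issue trivial (the midpoint is automatically in $\SRn$), whereas your apex at distance $\approx \sqrt{3}r/2$ from the clique needs a separate argument near $\partial \SRn$, where the empty-disk contradiction is unavailable for disks leaving the square.
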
 

The constant $\mu$ in part (4) above may be expressed explicitly as an integral, see equation (3.2) in~\cite{Penrose}.  It is the asymptotic expected number of components $C_5$ in the case when $nr^8 \to 1$. We can say more within the interval in (5) above where $\chi_c(G) \geq 3$: at the low end of the interval we have $\chi_c(G)=3$ \whp; and higher up, within a suitable subinterval, $\chi_c(G)$ is \whp\ as large as is possible for a geometric graph.
\begin{proposition} \label{prop.RGG4}
For the random geometric graph $G \in \RG$ in the plane:
\begin{enumerate}
\item if $nr^8 \to \infty$ and $nr^{18} \to 0$ then $\chi_c(G)=3$ \whp,
\item there exists $\varepsilon>0$ such that, if $n^{-\varepsilon} \leq r \leq \varepsilon \sqrt{\log n}$ then $\chi_c(G)= \chi_c^{\max}(\R^2)$ \whp.
\end{enumerate}
\end{proposition}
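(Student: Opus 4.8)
The plan is to handle the two parts separately, in each case reducing the clique chromatic number of the whole graph to a statement about its individual connected components. The key elementary observation is that every maximal clique is a connected set and hence lies in a single component, so that $\chi_c(G) = \max_C \chi_c(C)$, the maximum being over the connected components $C$ of $G$. Both the upper bound in (1) and the lower bound in (2) then become assertions about which small components do or do not occur \whp.

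For part (1), the lower bound $\chi_c(G) \ge 3$ \whp\ is immediate from Theorem~\ref{thm.RGGsummary}(5): since $nr^{18}\to 0$ forces $r\to 0$, the hypothesis $r \le 0.46\sqrt{\log n}$ holds for $n$ large, and $nr^8 \to \infty$ is assumed. For the matching upper bound I would first show that \whp\ every component of $G$ has at most $9$ vertices. This is a routine first-moment computation: the expected number of components on exactly $k$ vertices is at most of order $n\, r^{2(k-1)}$ (the count is dominated by tree-shaped components, each additional vertex costing a factor $\pi r^2$ for an edge, while isolation occurs with probability tending to $1$), so the expected number of components with at least $10$ vertices is at most $n\sum_{k\ge 10} c_k\, r^{2(k-1)} \le C\, n r^{18} \to 0$ by the assumption $nr^{18}\to 0$. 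It then remains to establish the purely combinatorial lemma that every geometric graph in $\R^2$ on at most $9$ vertices has $\chi_c \le 3$; combined with the component decomposition above, this yields $\chi_c(G) \le 3$ \whp.

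For part (2), note first that $\chi_c(G) \le \chi_c^{\max}(\R^2)$ holds deterministically, since $G$ is itself a geometric graph in the plane. Writing $k=\chi_c^{\max}(\R^2)$, fix once and for all a finite extremal geometric graph $H^*$ with $\chi_c(H^*)=k$ (such a graph exists and is finite because the value $\chi_c^{\max}(\R^2)\le 9$ is attained; after rescaling we may take its threshold to be $r$, so $H^*$ occupies a region of diameter $O(r)$ and has $m^* := |V(H^*)|$ vertices). By the component decomposition it suffices to show that \whp\ some component of $G$ is isomorphic to $H^*$, since then $\chi_c(G)\ge \chi_c(H^*)=k$. I would prove this by the first- and second-moment method: the expected number $N$ of components isomorphic to $H^*$ satisfies $\E N \asymp n\, r^{2(m^*-1)} e^{-\Theta(r^2)}$, the factor $e^{-\Theta(r^2)}$ being the probability that the $r$-neighbourhood of the copy, of area $O(r^2)$, contains no further point. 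Using $n^{-\varepsilon} \le r \le \varepsilon\sqrt{\log n}$ gives $\E N \ge n^{1 - 2\varepsilon(m^*-1) - O(\varepsilon^2)}$, which tends to infinity once $\varepsilon$ is chosen small enough in terms of $m^*$; a standard variance estimate (copies in well-separated regions are essentially independent, and overlapping copies contribute only lower-order terms) then gives $N \ge 1$ \whp\ via Chebyshev's inequality.

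The main obstacle is the combinatorial lemma in part (1): showing that no geometric graph in the plane on at most $9$ vertices requires four colours. The two regimes of Proposition~\ref{prop.RGG4} are separated precisely by whether the four-colour obstructions — which underlie the value $\chi_c^{\max}(\R^2)$ and are exhibited in part (2) — are small enough to appear, so proving that every such obstruction needs at least $10$ vertices is the crux. I would approach it by a structural analysis of connected unit disk graphs on few vertices: being connected, such a graph lies in a region of diameter $O(r)$, so it has few maximal cliques with a constrained intersection pattern, and one aims to produce an explicit $3$-colouring avoiding a monochromatic maximal clique, refining the cell-partition argument behind Theorem~\ref{thm.detR2}. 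By contrast, the second-moment argument in part (2) is standard random-geometric-graph technology; there the only points needing care are the estimate of the isolation probability and the resulting admissible range of $\varepsilon$.
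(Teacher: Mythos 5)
Your overall architecture matches the paper's: reduce $\chi_c(G)$ to the components of $G$, use a Penrose-type small-component bound (the paper's Lemma~\ref{lem.comps}) to show \whp\ no component has $10$ or more vertices when $nr^{18}\to 0$, and in part (2) plant a component isomorphic to an extremal connected geometric graph (the paper's Lemma~\ref{lem.allcomps}). The genuine gap is exactly where you say it is, and your proposed repair points in the wrong direction. You reduce part (1) to the claim that every geometric graph in the plane on at most $9$ vertices satisfies $\chi_c \leq 3$, and you propose to prove it by a structural analysis of small unit disk graphs, hoping the geometry forces ``few maximal cliques with a constrained intersection pattern.'' That hope is not realisable in any usable form: a connected $9$-vertex unit disk graph can occupy a region of diameter up to $8r$ and realises a very rich family of graphs, so this route amounts to an infeasible classification. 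The paper instead proves the purely graph-theoretic Lemma~\ref{lem.detsmall}: \emph{every} graph on at most $10$ vertices has $\chi_c \leq 3$ (sharp, since the Gr\"{o}tzsch graph on $11$ vertices has $\chi_c = 4$), so no geometry is needed at all. Its proof is a short induction: one may assume minimum degree at least $3$; the case $n\leq 5$ is classified directly ($C_5$ being the unique exception with $\chi_c=3$); for $n\leq 9$, if $\deg(v)\geq 4$ then giving $N(v)$ one colour and applying the $n\leq 5$ case to $G\setminus N(v)$ forces $G\setminus N(v)\cong C_5$, contradicting that $v$ has neighbours there, and the remaining cubic case is killed by parity and the same trick; $n=10$ needs one further two-case analysis (Brooks' theorem when degrees are at most $3$, and an explicit $3$-colouring when a degree-$4$ vertex leaves behind $v$ plus a $C_5$). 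Without this lemma, or some proof of your weaker geometric version, your part (1) is a plan rather than a proof, precisely at its crux.

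Part (2) is sound but takes a genuinely different route from the paper. You argue via first and second moments with Chebyshev in the binomial model; the paper's Lemma~\ref{lem.allcomps} instead Poissonizes, packs $\Theta(n/r^2)$ disjoint translates of a configuration of $h$ small balls (one Poisson point in each, none elsewhere in a surrounding region of area $A$ chosen with $b^2A<1$ for $b=(\pi h)^{-1/2}$), and exploits exact independence across disjoint cells to get failure probability $\exp\bigl(-\Omega(n^{1-b^2A}(\log n)^{-(2h-2)})\bigr)=o(n^{-1/2})$, which survives de-Poissonization. Your second-moment version would also work, and your exponent arithmetic $\E N \geq n^{1-2\varepsilon(m^*-1)-O(\varepsilon^2)}$ is the same bookkeeping as the paper's condition $r \leq \sqrt{\log n/(\pi h)}$; the Poisson route buys you independence for free and the quantitatively strong failure bound needed to transfer back to $\RG$. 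Two small repairs to your write-up: take $H^*$ \emph{connected} (replace it by a component attaining $\chi_c$, legitimate since $\chi_c$ of a graph is the maximum over its components, isolated vertices being ignored), and note that attainment of $\chi_c^{\max}(\R^2)$ by a finite graph is automatic because geometric graphs are finite and the values are integers bounded by $9$.
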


 The only random geometric graphs we consider here are those described above, where the points are independently and uniformly distributed over a square in the plane.  See~\cite{Penrose} for more general models of random geometric graphs, and see~\cite{dglu2011} in particular for models in high dimensions.

%%%%%%%%%%%%%%%%%%%%%%%%%%%%%%
%%%%%%%%%%%%%%%%%%%%%%%%%%%%%%

\section{Deterministic results}\label{sec.detproofs}

In this section, we start by proving Theorem~\ref{thm.detR2}, and then consider geometric graphs in dimensions greater than 2.  
After that we give Lemma~\ref{lem.detsmall}, concerning the maximum value of $\chi_c(G)$ for general $n$-vertex graphs, for small values of $n$: this result  will be used in the next section in the proof of Proposition~\ref{prop.RGG4}.

% for small values of $n$ we investigate the maximum value of $\chi_c(G)$ for general $n$-vertex graphs (Lemma~\ref{lem.detsmall} will be used later in the proof of Proposition~\ref{prop.RGG4}).

\begin{proof}[Proof of Theorem~\ref{thm.detR2}]
Fix $y$ with $\frac12 < y < \sqrt{3}/2$. Divide the plane into horizontal strips $\R \times [ny,(n+1)y)$ for $n \in \Z$. Suppose we are given a finite set of points in the plane, and let $G$ be the corresponding unit disk graph. Consider one strip, let $W$ be the subset of the given points which are in the strip (which we may assume is non-empty), and $H$ be the geometric graph corresponding to $W$. We claim that $\chi_c(H) \leq 3$.

For $\bu, \bv \in W$ we write $\bu \prec \bv$ if $u_x < v_x$ and $uv \in E(\overline{H})$.  If $\bu \prec \bv$ then $1< d(\bu,\bv) < (v_x-u_x)^2+ \tfrac34$ so $v_x > u_x+\tfrac12$.  Thus if also $\bv \prec \bw$ then $w_x > v_x+ \tfrac12 > u_x+1$, so $\bu \prec \bw$.  Thus $\prec$ is a (strict) partial order on $W$. 
%\dmc{I would prefer to say "strongly partial order", as we are not reflexive} 
Further, $\overline{H}$ is the corresponding comparability graph, since if $uv \in E(\overline{H})$ then $u_x \neq v_x$ (for if $u_x = v_x$ then $d(\bu,\bv)=|u_y-v_y| <y<1$ so $uv$ is in $E(H)$ not $E(\overline{H})$).  Thus $H$ is a co-comparability graph. Hence, by the result of Duffus et al.~\cite{Duffus+} mentioned earlier, we have $\chi_c(H) \leq 3$, as claimed. (Indeed, we do not know an example where $\chi_c(H)>2$.)

Now label the strips cyclically $a,b,c,a,b,c,a,\ldots$ moving upwards say, and use 3 colours to properly clique colour the $a$-strips, a new set of 3 colours for the $b$-strips and similarly a new set of 3 colours for the $c$-strips, using 9 colours in total.  A monochromatic maximal clique with at least 2 vertices could not have points in two different strips since $2y> 1$, and could not be contained in one strip since we have a proper clique-colouring there.  Thus $\chi_c(G) \leq 9$.
\end{proof}
\smallskip

Theorem~\ref{thm.detR2} shows that the clique chromatic number is at most 9 for any geometric graph in the plane.  We next see that, for a given dimension $d$, there is a uniform bound on the clique chromatic number for all geometric graphs in $\R^d$.

\begin{proposition} \label{thm.detRd}
Let $G$ be a geometric graph in $\R^d$.  Then
\[ \chi_c(G) \leq 2 \, {(\lceil \sqrt{d}\rceil  + 1)}^d < 2\, e^{2\sqrt{d}} \, d^{d/2}.\]
\end{proposition}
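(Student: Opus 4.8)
The plan is to generalize the strip-partition argument from the proof of Theorem~\ref{thm.detR2} to $\R^d$. In the planar case, the plane was divided into horizontal strips of height $y \in (\tfrac12, \tfrac{\sqrt3}{2})$, chosen so that (i) each strip induces a co-comparability graph (hence clique-colourable with $3$ colours), and (ii) two vertices in non-adjacent strips are never adjacent, so no monochromatic maximal clique can span colour classes. The key quantitative facts were that within a strip the ``complement edges'' force an $x$-displacement greater than $\tfrac12$, giving transitivity, and that the strip height exceeds $\tfrac12$ but stays below $1$ in a way that controls the within-strip structure. For general $d$ I would instead tile $\R^d$ by a cubic grid and $2$-colour the cells.

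First I would partition $\R^d$ into axis-aligned cubes of side length $s$, chosen so that any two points lying in the \emph{same} cube are within Euclidean distance $\le 1$, i.e.\ so that the cube's diameter $s\sqrt{d} \le 1$; the natural choice is $s = 1/\sqrt{d}$. With this choice, the vertices falling in any single cube form a clique in $G$, so the induced subgraph on one cube is complete and can be clique-coloured with just $2$ colours. Next I would show that the collection of cubes can itself be properly $2$-coloured, in the sense of a ``block $2$-colouring'', so that any maximal clique of $G$ is split: the two colour classes within each cube are reused across cubes according to a colouring of the grid of cubes by $(\lceil\sqrt d\rceil+1)^d$ colours, and then each such grid-colour gets its own private pair of the two intra-cube colours. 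This yields the bound $2\,(\lceil\sqrt d\rceil+1)^d$ total colours, matching the statement.

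The crux is choosing the grid-colouring radius correctly so that no maximal clique is monochromatic. A clique of $G$ has diameter $\le 1$, so it is contained in a ball of radius $\le \tfrac12$, and hence it meets only cubes lying in a bounded window: since each cube has side $s = 1/\sqrt d$, a set of diameter $\le 1$ can meet cubes spanning at most $\lceil 1/s\rceil + 1 = \lceil \sqrt d\rceil + 1$ consecutive cells in each of the $d$ coordinate directions. Therefore I would colour the cubes by their coordinate indices reduced modulo $m := \lceil\sqrt d\rceil + 1$ in each direction, giving $m^d$ grid-classes, with the property that any two cubes meeting a common clique receive \emph{distinct} grid-colours. Within each cube I assign its two points-colours from the unique pair dedicated to that cube's grid-colour; since a maximal clique spanning several cubes touches cubes of distinct grid-colours, it automatically contains two differently coloured vertices, while a maximal clique inside a single cube is a complete graph and is non-monochromatic because that cube uses both colours of its pair. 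The main obstacle, and the step needing care, is verifying that $m = \lceil\sqrt d\rceil + 1$ is large enough that the modular reduction never identifies two cubes that can be hit by one common clique; this is exactly the statement that a diameter-$1$ set spans fewer than $m$ cubes per axis, which follows from $1/s = \sqrt d \le \lceil\sqrt d\rceil < m$.

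Finally, the displayed numerical inequality $2(\lceil\sqrt d\rceil+1)^d < 2\,e^{2\sqrt d} d^{d/2}$ is a routine estimate: writing $\lceil\sqrt d\rceil + 1 \le \sqrt d + 2 = \sqrt d\,(1 + 2/\sqrt d)$ and raising to the $d$-th power, one uses $(1+2/\sqrt d)^d \le e^{2\sqrt d}$, which I would record in one line rather than belabour.
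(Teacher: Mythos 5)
Your proof is correct and is essentially the paper's own argument: your cubic tiling with side about $1/\sqrt{d}$ and grid-classes given by reducing the cube indices modulo $m=\lceil\sqrt{d}\rceil+1$ in each coordinate, each class getting a private pair of colours, is exactly the paper's partition of the tessellation $\{Q+s\z : \z\in\Z^d\}$ (with $s=1/\lceil\sqrt{d}\rceil$) into the $(\lceil\sqrt{d}\rceil+1)^d$ translated subfamilies $\cF(\y)$, within each of which distinct cells are at distance greater than $1$ and so induce disjoint cliques needing only $2$ colours. The only cosmetic differences are your choice $s=1/\sqrt{d}$ in place of $1/\lceil\sqrt{d}\rceil$ and your direct case check that every maximal clique is split, where the paper instead invokes subadditivity of $\chi_c$ over the vertex partition; the concluding estimate $(\lceil\sqrt{d}\rceil+1)^d \le (\sqrt{d}+2)^d < d^{d/2}e^{2\sqrt{d}}$ is identical.
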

Our simple proof uses a tessellation into small hypercubes which induce cliques.  In the case $d=2$ it is better to use hexagonal cells, and then the bound improves from 18 to 14.  In~\cite{MPT}, hexagonal cells are used in pairs to show that $\chi_c^{\max}(\R^2) \leq 10$, nearly matching the upper bound 9 in Theorem~\ref{thm.detR2}.
%\m{We said that the upper bound holds for the natural list version of $\chi_c$? Is this true? Why?}

\begin{proof}
We may assume that the threshold distance $r$ is 1. Let $k= \lceil \sqrt{d}\rceil$, let $s=1/k$, and let $Q$ be the hypercube $[0,s)^d$.  Observe that $Q$ has diameter $s \sqrt{d} \leq 1$, so the subgraph of $G$ induced by the points in $Q$ is complete.  We partition $\R^d$ into the family of translates $Q+ s\z$ of $Q$, for $\z \in \Z^d$.  (Here $Q +\y$ is the set of all points $\x+\y$ for $\x \in Q$.) Consider the subfamily $\cF_0 = (Q+ (k+1)s\z : \z \in \Z^d)$. Let $\z$ and $\z'$ be distinct points in $\Z^d$, and let $\x$ and $\x'$ be points in the cells $Q+(k+1)s\z$ and $Q+(k+1)s\z'$ in $\cF_0$ respectively. Without loss of generality, we may assume that $z_1 > z'_1$.  Then 
\[ d(\x, \x') \geq  x_1-x'_1 %\dm{\ge (k+1)s(z_1-z'_1)} 
> (k+1)s(z_1-z'_1) - s \geq ks=1. \]
Thus the subgraph $G'$ of $G$ induced on the vertices corresponding to the points in the cells of $\cF_0$ consists of disjoint cliques, with no edges between them.  Hence $\chi_c(G') \leq 2$, since we just need to ensure that each cell with at least two points gets two colours. Finally, let $\cF(\y)$ denote the translate by $\y$ of the family $\cF_0$, so 
\[ \cF(\y) =  (Q +\y + (k+1)s\z: \z \in \Z^d)\]
% \dmc{I am not sure. we could also say $y \in \{0,\ldots,k\}^d$, but perhaps you're right it is good to have the definition for any $y \in \Z^d$ and just conclude below how to partition}
(and $\cF_0=\cF({\mathbf 0})$).  Let $S(\y)$ be the union of the cells in $\cF(\y)$, and let $G(\y)$ be the subgraph of $G$ induced by the vertices corresponding to the points in $S(\y)$.  Then the $(k+1)^d$ sets $S(\y)$ for $\y \in \{0,\ldots,k\}^d$ partition $\R^d$; and so
\[ \chi_c(G) \leq \sum_{\y \in \{0,\ldots,k\}^d} \chi_c(G(\y)) \leq 2 (k+1)^d, \]
as required for the first inequality.  For the second inequality,  we have
\[ (k+1)^d < (\sqrt{d}+2)^d = d^{d/2} (1+ 2/\sqrt{d})^d < d^{d/2} e^{2\sqrt{d}},\]
and the proof is finished.
\end{proof}

For example, we may deduce from this result that $\chi_c^{\max}(\R^3) \leq 2 \cdot 3^3 = 54$.  It is not hard to make small improvements for each $d$, but let us focus on the case $d=3$.

\begin{proposition} \label{thm.detR3}
If $G$ is a geometric graph in $\R^3$ then $\chi_c(G) \leq 21$.
\end{proposition}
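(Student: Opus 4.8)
The plan is to refine the tessellation argument used in Proposition~\ref{thm.detRd} so as to reduce the number of colour classes from $2\cdot 27 = 54$ down to $21$. The obstruction in the generic proof is wasteful: it partitions $\R^3$ into $(k{+}1)^3 = 27$ shifted families $S(\y)$, each coloured with $2$ colours, and crucially each family reuses a fresh palette. Two separate economies are available. First, since we take $k=\lceil\sqrt{3}\,\rceil = 2$, the basic cubes have side $s=1/2$ and diameter $\sqrt{3}/2 <1$, so each cube induces a clique; but the diameter is strictly below $1$, and this slack should let me enlarge the cells (e.g. use a more efficient cell shape, or cells of side slightly larger than $1/2$ chosen so the diameter is still $\le 1$) and thereby reduce how many translates are needed to tile space with clique-inducing cells that are pairwise non-adjacent within a family. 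Second, and more importantly, the palettes need not be disjoint across the $(k{+}1)^3$ families: I would argue that colours can be shared between families whose cells are never simultaneously part of a single maximal clique.

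Concretely, the key steps I would carry out are as follows. First I fix a tessellation of $\R^3$ into axis-aligned half-open cubes $Q + s\z$ of side $s \le 1/\sqrt{3}$, so each cube induces a clique in $G$. Second, I select a sublattice of cells that are mutually non-adjacent (pairwise distance $>1$), exactly as in the proof of Proposition~\ref{thm.detRd}, so that the induced subgraph is a disjoint union of cliques colourable with $2$ colours. Third — the genuinely new step — rather than giving each of the $(k{+}1)^3$ translated families a brand-new pair of colours, I would set up a \emph{global} colouring of the whole cube tessellation in which (i) any two cubes close enough to lie in a common clique receive, between their two-colour assignments, enough distinct colours to break monochromaticity, and (ii) colours are recycled across cubes that are far apart. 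The counting then reduces to a colouring problem on the ``interaction graph'' whose vertices are cube-classes and whose edges join classes that can co-occur in a maximal clique; bounding its chromatic-type parameter by a careful geometric packing argument in $\R^3$ is what should bring the total down to $21$.

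The main obstacle I expect is exactly this last geometric bookkeeping: verifying that no maximal clique ends up monochromatic once palettes are shared. A maximal clique can span several adjacent cubes (any set of cubes whose union has diameter $\le 1$), and I must ensure that within the vertex set of every such clique at least two colours appear. This requires understanding, for the chosen cell size $s$, precisely which collections of cells can jointly host a clique — essentially a local packing computation counting how many side-$s$ cubes can fit inside a ball or region of diameter $1$ in $\R^3$ — and then checking that the proposed colour assignment assigns at least two colours to every such collection that actually contains two or more points. The delicate point is handling cliques concentrated in a single cube versus cliques straddling a boundary between cubes: the single-cube case forces each nonempty cube to use two colours internally, while the straddling case constrains how neighbouring cubes may share colours. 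Balancing these two requirements while keeping the total at $21$ is where the real work lies; the rest is routine tessellation geometry inherited from Proposition~\ref{thm.detRd}.
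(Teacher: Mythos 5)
Your proposal is a plan rather than a proof, and the gap sits exactly where you place ``the real work'': nothing in the write-up actually produces a $21$-colouring or shows that one exists along this route. The subadditivity $\chi_c(G) \leq \sum_{\y} \chi_c(G(\y))$ used in Proposition~\ref{thm.detRd} is valid precisely \emph{because} the palettes of the $(k+1)^d$ families are disjoint: a maximal clique either lies inside one part (where it is also maximal, hence bichromatic) or meets two parts and is automatically bichromatic by palette disjointness. The moment you share colours across families, the second case fails and the entire correctness argument must be rebuilt; you acknowledge this but never define the ``interaction graph'', never bound its chromatic-type parameter, and never perform the local packing computation. There is also no indication that the number $21$ would emerge from such a computation: with clique-inducing cubes of side $s \leq 1/\sqrt{3}$, each nonempty cube genuinely needs $2$ colours internally (a maximal clique can consist exactly of the points of one cube), so reaching $21$ would require roughly $10$ colour-shareable cube classes, whereas a cube in $\R^3$ has on the order of $27$ cubes within unit distance; you give no argument that the dense interaction structure can be coloured that economically, and it is far from clear that it can.

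The paper avoids this barrier with a structurally different decomposition. It partitions $\R^2$ into the hexagonal Voronoi cells of the unit triangular lattice, $7$-colours the cells via the sublattice of edge-length $\sqrt{7}$, and rescales by $\tfrac34$ so that cells have diameter $\sqrt{3/4}$ while same-colour cells remain at distance $>1.1$. Then it views $\R^3$ as $\R^2 \times \R$ and, for each cell $C$, considers the \emph{unbounded} cylinder $C \times \R$: any two points there with equal $z$-coordinate are adjacent, so non-adjacency forces a $z$-gap exceeding $\tfrac12$, making the complement of $G_C$ a comparability graph; by the theorem of Duffus, Kierstead and Trotter, $\chi_c(G_C) \leq 3$. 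This gives $7 \times 3 = 21$. The essential idea you are missing is that one dimension should be left unbounded and handled by the order-theoretic $3$-colour bound for co-comparability graphs (exactly as in the strip proof of Theorem~\ref{thm.detR2}), rather than tiling all three dimensions with bounded clique cells, each of which costs $2$ colours and forces the palette-sharing problem you could not resolve.
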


\begin{proof}
Let $T$ denote the unit triangular lattice in $\R^2$, with vertices the integer linear combinations of ${\bf p}=(1,0)$ and ${\bf q}=(\tfrac12, \tfrac{\sqrt{3}}2)$ (and where the edges have unit length). Consider the hexagonal packing in the plane, as in Figure 1, formed from the hexagonal Voronoi cells of $T$. 
\begin{figure}[h]
\begin{center}
\includegraphics[width=2.5in,height=2.5in]{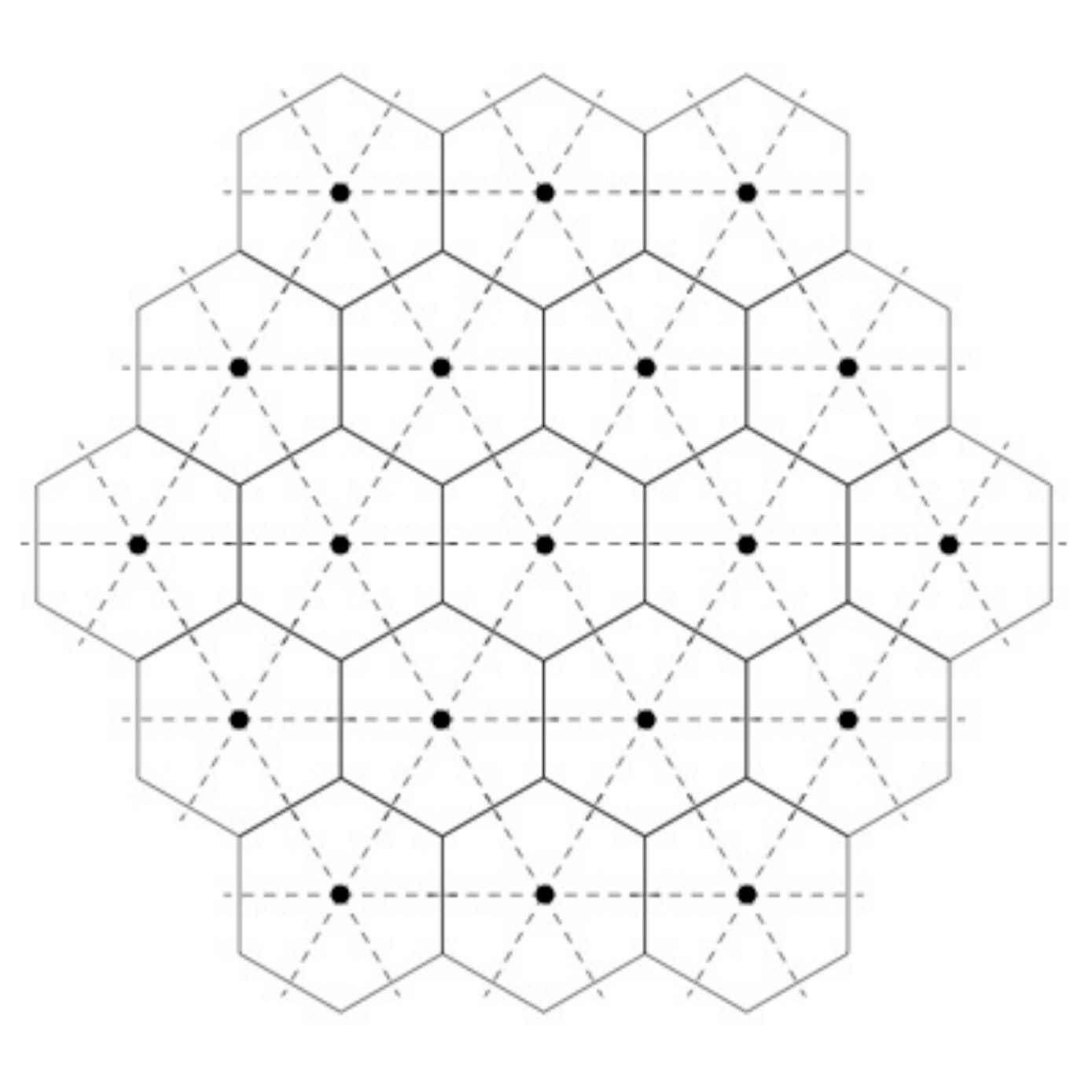}
\end{center}
\caption{Dashed lines join points of the unit triangular lattice $T$ at distance 1, and solid lines bound the hexagonal Voronoi cells}
\label{fig2}
\end{figure}
%The hexagonal cells have diameter $2/\sqrt{3}$. 

The sublattice $T'$ of $T$ with vertices generated by $2{\bf p}+{\bf q}$ and ${\bf -p}+3{\bf q}$ is a triangular lattice with edge-length $\sqrt{7}$, and 7 translates of $V(T')$ partition $V(T)$
(for example translate by $(0,0), {\bf q},2{\bf q}, 3{\bf q},{\bf p}\!+\!{\bf q}, {\bf p}\!+\!2{\bf q}, {\bf p}\!+\!3{\bf q}$ -- see Figure~\ref{fig:7colors}, and for example~\cite{MR99}). 
We thus obtain a 7-colouring of the vertices of $T$, and this gives a 7-colouring of the cells.

Since the cells have diameter $2/\sqrt{3}$, the distance between any two cells centred on distinct points in $T'$ is at least $\sqrt{7} - 2/\sqrt{3} \approx 1.491051$.  (In fact, the minimum distance occurs for example between the cells centred on $(0,0)$ and on $2{\bf p}+{\bf q}$, and equals $d( (\tfrac12,\tfrac1{2 \sqrt{3}}),  (2, \tfrac1{\sqrt{3}}) )= \sqrt{\tfrac73} \approx 1.527525$.)
%
% Consider the cells $C_0$ centred on $(0,0)$ and $C_1$ centred on $2{\bf p}+{\bf q}$ (where both centres are in $T'$). The distance between $C_0$ and $C_1$ (defined as the infimum of all $d(x,y)$ for $x \in C_0$ and $y \in C_1$) is $d( (\tfrac12,\tfrac1{2 \sqrt{3}}),  (2, \tfrac1{\sqrt{3}}) )= \sqrt{\tfrac73}$. It is now easy to see that for any distinct cells $C$ and $C'$ with centres in $T'$ we have $d(C,C') \geq \sqrt{\tfrac73}$.
%
%Start with the cell $C_0$ centred on $(0,0)$: move two \dm{cells} to the right and then diagonally away and up to the cell $C_1$, centred on $(\tfrac52, \tfrac{\sqrt{3}}2)$.  Then the distance between $C_0$ and $C_1$ \dm{(defined as the minimum of all $d(x,y)$ with $x \in C_0$ and $y \in C_1$)} is $d( (\tfrac12,\tfrac1{2 \sqrt{3}}),  (2, \tfrac1{\sqrt{3}}) )= \sqrt{\tfrac73}$.  \m{Dieter: I put back an old figure and put more explanation of the colouring. Feel free to take the figure out if you don't like it}\dm{Repeat this pattern from any new cell. All these cells will be given the same colour. We obtain all cells connected by the red edges in the left figure of Figure~\ref{fig:7colors}. Let $a=(1,0)$ and $b=(1/2,\sqrt{3}/2)$. By considering $7$ translates of $C_0$ by $b,2b,3b, a+b, a+2b,$ and $a+3b$, respectively. Use for each translate a new colour, but still all cells inside one translate are given the same colour,  as shown in the right picture of Figure~\ref{fig:7colors}. We thus obtain is a 
Thus our 7-colouring of the cells is such that, for any two distinct cells of the same colour, the distance between them is at least $1.49$
%$\sqrt{\tfrac73}$ 
(see also Theorems 3 and 4 of~\cite{MR99} for related results).
%\m{C: let us keep the left part of the figure, which is helpful, but the other part is hard to follow}
\begin{figure}[h]
%\centering
%\begin{subfigure}{.6\textwidth}
%  \centering
  \includegraphics[width=2.5in,height=2.5in]{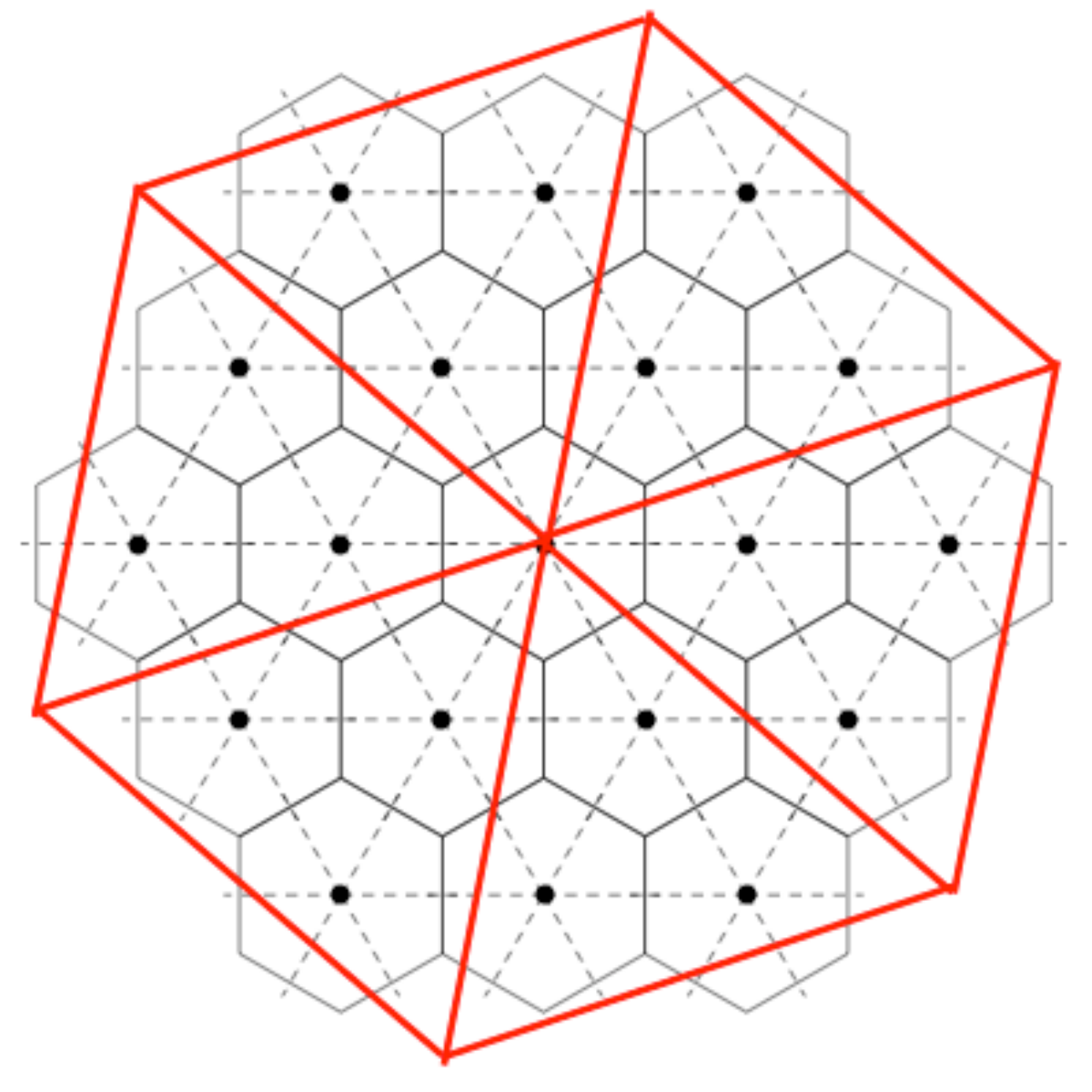}
%\end{subfigure}%
%\begin{subfigure}{.6\textwidth}
%  \centering
%  \includegraphics[width=.6\linewidth]{A2cellsv3}
%\end{subfigure}
\caption{Cells with the same colour. Any two cells of the same colour are at distance at least $\sqrt{\tfrac73}$}
\label{fig:7colors}
\end{figure}

%\begin{figure}[h]
%\begin{center}
%\includegraphics[width=2.5in,height=2.5in]{A2cellsv3}
%\end{center}
%\caption{A 7-colouring of the cells so that any two cells of the same colour are at distance at least $\sqrt{\tfrac73}$}
%\label{fig:7colors}
%\end{figure}
 Rescale by multiplying by $\tfrac34$, %$\sqrt{\tfrac34} \cdot \tfrac{\sqrt{3}}2 =  \tfrac34$, 
 so that the diameter of a hexagonal cell is now $\tfrac34 \cdot \tfrac{2}{\sqrt{3}} = \sqrt{\tfrac34}$.  The distance between distinct rescaled cells corresponding to centres in $T'$ has now been reduced to at least $\tfrac34 \cdot 1.49 = 1.1175 > 1.1$, still bigger than 1.
% $C_0$ and $C_1$ has now been reduced to $\tfrac34 \sqrt{\tfrac73}  = \sqrt{\tfrac{21}{16}}$ which is still bigger than 1.

Suppose that we are given any finite set of points in $\R^3$, take $r=1$, and let $G$ be the corresponding geometric graph.  Think of $\R^3$ as $\R^2 \times \R$.  Consider any cell $C$,  and let $G_C$ be the geometric graph corresponding to the points in the cylinder $C \times \R$, with threshold distance $r=1$.  We may now argue as in the proof of Theorem~\ref{thm.detR2}: for clarity we spell this out.
%\m{Dieter: Expanded this a bit} \m{\CC rewritten}
%
Observe that for $\bu, \bv \in C \times \R$, if $u_z=v_z$ then $d(\bu,\bv) \leq \tfrac34 <1$ so $uv \in E(G_C)$.
For $\bu, \bv \in C \times \R$ we write $\bu \prec \bv$ if $uv \in E(\overline{G_C})$ and 
%either $u_x < v_x$, or $u_x=v_x$ and $u_y  < v_y$, or $u_x=v_x$, $u_y=v_y$ and 
$u_z < v_z$. %Note that i
If $\bu \prec \bv$ then
\[ 1< d(\bu,\bv)^2 = (u_x-v_x)^2 + (u_y-v_y)^2+(u_z-v_z)^2 \leq \tfrac34 + (u_z-v_z)^2,\]
and so $v_z > u_z +\tfrac12$.  If also $\bv \prec \bw$ then similarly  $w_z > v_z +\tfrac12$; and then $w_z > u_z+1$ and so $\bu \prec \bw$.  It follows that $\prec$ is a (strict) partial order, and $G_C$ is the co-comparability graph.
%
% $d(\bu,\bv) \ge \sqrt{\tfrac{21}{16}}$, and thus, $\prec$ is a strong partial order and $\overline{G_C}$ is the corresponding comparability graph. 
Thus, once more by the result of Duffus et al.~\cite{Duffus+}, we have $\chi_c(G_C) \leq 3$. 

Consider the 7-colouring of the cells.  For each colour $i=1,\ldots,7$ and each cell $C$ of colour $i$, properly clique colour the points in $C \times \R$ using colours $(i,1),(i,2),(i,3)$.  If two points in distinct cylinders have the same colour, then the distance between them is at least $1.1 >1$, %$\sqrt{\tfrac{21}{16}}>1$, 
so the corresponding vertices are not adjacent in $G$. Thus the colourings of the cylinders fit together to give a proper clique colouring of $G$ using at most 21 colours, as required.
\end{proof}

\smallskip

 The next result shows that, if we do not put some restriction on the dimension $d$, then we can say nothing about a geometric graph in $\R^d$. 
%Let us point out that the relationship between $\mathcal{G}(n,p)$ and a different model of high-dimensional random geometric graphs was analysed for example in~\cite{dglu2011}.
%\m{C: say more on Devroye et al paper~\cite{dglu2011}?}
%\m{C: possible question for the last section, what is the least $d(n)$ in Prop~\ref{prop.allgeom}?  Is this discussed in~\cite{dglu2011}?}
%\m{C: our discussion gives $d = \Omega(\log n / \log\log n)$}
%\m{D: The paper of~\cite{dglu2011} considers points on the unit sphere, so results do not apply directly. I added the reference here.  Your comment on $d$ I put in the last section}

\begin{proposition} \label{prop.allgeom}
For each graph $G$ there is a positive integer $d$ such that $G$ is a geometric graph in $\R^d$, and indeed if $G$ has $n \geq 2$ vertices we can take $d \leq n-1$.
\end{proposition}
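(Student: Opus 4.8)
The plan is to realize $G$ through the classical correspondence between Euclidean point configurations and positive semidefinite Gram matrices, choosing the Gram matrix so that its rank is at most $n-1$ and so that edge-pairs are strictly closer together than non-edge pairs. Write $A$ for the adjacency matrix of $G$ and let $\lambda=\lambda_{\min}(A)$ be its least eigenvalue. If $G$ has no edges the claim is trivial: place the vertices at the collinear points $1\cdot\mathbf e_1,2\cdot\mathbf e_1,\dots,n\cdot\mathbf e_1$ and take any $r<1$. So assume $G$ has at least one edge; then, applying Cauchy interlacing to the principal $2\times 2$ submatrix indexed by the two endpoints of an edge (whose eigenvalues are $\pm1$), we get $\lambda\le -1$.

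First I would set $M=-\lambda I+A$. The eigenvalues of $M$ are exactly the numbers $-\lambda+\mu$ as $\mu$ ranges over the eigenvalues of $A$, so $M$ is positive semidefinite, and since $-\lambda+\lambda=0$ is one of them we have $\operatorname{rank}(M)\le n-1$. Writing $M=X^{\top}X$ with $X$ of size $(\operatorname{rank} M)\times n$ and taking $\mathbf x_1,\dots,\mathbf x_n$ to be the columns of $X$, we obtain points in $\mathbb{R}^{\operatorname{rank} M}\subseteq\mathbb{R}^{n-1}$ whose Gram matrix is $M$. This one construction settles both assertions at once: realizability and the bound $d\le n-1$.

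Next I would read off the pairwise distances. Since $M_{ii}=-\lambda$ for every $i$, while $M_{ij}=1$ for an edge and $M_{ij}=0$ for a non-edge, we get $\|\mathbf x_i-\mathbf x_j\|^2=-2\lambda-2M_{ij}$, which equals $2(|\lambda|-1)$ on edges and $2|\lambda|$ on non-edges. Because $|\lambda|\ge 1$ both quantities are nonnegative, and edge-distances are strictly smaller than non-edge distances; hence the threshold given by $r^2=2|\lambda|-1$ separates them, so every edge has length $\le r$ and every non-edge has length $>r$. These points together with this $r$ therefore realize $G$ in $\mathbb{R}^{n-1}$.

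The step I expect to be the real crux is the rank bound: a generic Gram matrix with the correct sign pattern (for instance $I+\eps A$) is positive \emph{definite} and only yields dimension $n$, so the whole point is the exact shift by $-\lambda_{\min}$, which forces a kernel vector and drops the rank to at most $n-1$ while simultaneously preserving positive semidefiniteness and the edge/non-edge separation. A minor secondary point to record is that when $\lambda=-1$, i.e.\ when $G$ is a disjoint union of cliques, the edge-pairs land at distance $0$ (coincident points); this is still a legitimate realization under the definition in Section~\ref{sec:intro}, since coincident points have distance $0\le r$ and are thus adjacent. If distinct points are preferred, one can perturb slightly and then pass to the affine hull of the $n$ points, which has dimension at most $n-1$, to restore the bound.
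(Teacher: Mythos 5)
Your proof is correct, and it takes a genuinely different route from the paper's. The paper argues by induction on $n$: it places the points in the affine hyperplane $\{\x : {\mathbf 1}\cdot\x=1\}$ of $\R^n$, starting from the unit vectors $\be^1,\ldots,\be^n$ (pairwise distance $\sqrt2$) and perturbing the last point by a small multiple of a vector $\y$ orthogonal to ${\mathbf 1}$ chosen so that $\y\cdot(\be^n-\be^i)$ has the sign dictated by whether $in$ is an edge; the threshold is $\sqrt2$ throughout, the points are distinct by construction, and the bound $d\le n-1$ then comes from the affine span of $n$ points. Your argument is instead a one-shot spectral construction: $M=A-\lambda_{\min}(A)I$ is positive semidefinite with nontrivial kernel, hence the Gram matrix of $n$ points in $\R^{\operatorname{rank}M}\subseteq\R^{n-1}$, and the identity $\|\x_i-\x_j\|^2=2|\lambda_{\min}|-2M_{ij}$ together with interlacing ($\lambda_{\min}\le-1$ once $G$ has an edge) gives the separation at $r^2=2|\lambda_{\min}|-1$; all steps check out, including the edgeless base case. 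Your approach buys more than the statement asks: the dimension obtained is really $\operatorname{rank}(A-\lambda_{\min}I)=n-\operatorname{mult}(\lambda_{\min})$, which can be far smaller than $n-1$ (for a disjoint union of $k$ cliques, not all singletons, it is $k$), and the construction is non-inductive with explicit distances. What it costs is exactly the degeneracy you flagged: when $\lambda_{\min}=-1$, adjacent vertices collapse to coincident points; the paper's definition (``given $n$ points'') does not explicitly forbid repeats, but your perturbation fix is the safe reading, and note it can be carried out directly inside $\R^{n-1}$ --- your edge/non-edge inequalities are strict with a fixed margin, so small perturbations preserve them and the affine-hull step is not even needed. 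It is a pleasing parallel that both proofs drop from $n$ to $n-1$ dimensions via a forced one-dimensional degeneracy: the paper's constraint ${\mathbf 1}\cdot\x^i=1$ on all points, versus your kernel eigenvector of $M$.
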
 
\noindent 
Observe that the second part of this result follows immediately from the first, since the affine span of $n$ points has dimension at most $n-1$. 
%\m{\CC Thus we could just use the old proof.  Should we do so?}

\begin{proof}
We prove more, namely that for any $\eps>0$ there are points $\x^1,\ldots,\x^n$ in $\R^n$ such that for each $i$ we have ${\mathbf 1} \cdot \x^i =1$ and $\x^i$ is within distance $\eps$ of $\be^i$ (where $\be^i$ is the $i$th unit vector in $\R^n$), and such that for $i \neq j$
\[ d(\x^i,\x^j)
     \begin{cases} < \sqrt{2} & \text{if $ij$ is an edge }\\
                             > \sqrt{2} & \text{ if not.}
     \end{cases}
\] 

 The case $n=2$ is trivial.  Suppose that $n \geq 3$ and the result holds for $n-1$.  Start with $\x^i=\be^i$ for each $i=1,\ldots,n$.  We first adjust $\x^n$.
For $1\leq i<j\leq n$ let $z_{ij}$ be $-1$ if $ij$ is an edge and $+1$ if not.  Note that the $n$ $n$-vectors ${\mathbf 1}$ and $\be^n - \be^1, \be^n - \be^2,\ldots, \be^n - \be^{n-1}$ form a basis of $\R^n$.  Hence there is a unique vector $\y$ with $\y \cdot {\mathbf 1}=0$ and $\y \cdot (\be^n - \be^{i}) = z_{in}$ for each $i=1,\ldots,n-1$.

Let $\eps>0$, and assume (as we may) that  $\eps < 1/\| \y \|$.  Let $\delta= \eps/\|\y\|$, and re-set $\x^n$ to be $\be^n+ \delta \y$.
Note first that ${\mathbf 1} \cdot \x^n =1$ and $d(\x^n,\be^n) = \| \delta \y \| = \eps$.  For each $i \in [n-1]$ 
\[ \| \x^n - \be^{i}\|^2  =
 \| (\be^n - \be^{i}) + \delta \y \|^2  =  2 + 2 \delta z_{in} + \delta^2 \| \y \|^2 = 2 + \eps^2 + 2 \delta z_{in}. \]
But $\eps^2-2\delta = \eps(\eps- \frac2{\|\y\|})   < -\delta$. Thus $\| \x^n - \be^{i}\|^2$ is $<2 - \delta$ if $in$ is an edge and $>2+\delta$ if not.  Let
\[0< \eta < \min \{ \sqrt{2 + \delta} - \sqrt{2}, \sqrt{2} - \sqrt{2 - \delta} \}.\]
By the induction hypothesis, we may choose points $\x^1,\ldots,\x^{n-1}$ in $\R^{n}$ with $n$th co-ordinate 0, such that distances corresponding to edges are $<\sqrt{2}$ and other distances are $>\sqrt{2}$,
and for each $i \in [n-1]$ we have $\x^i \cdot {\mathbf 1}=1$ and $\x^i$ is within distance $\eta$ of $\be^i$.  By the triangle inequality,
%\[ d(\x^n, \be^i) - \eta \leq d(\x^n, \x^i) \leq d(\x^n, \be^i) + \eta \]
%so 
$d(\x^n, \x^i) = d(\x^n, \be^i) + \eta_i$ for some $\eta_i$ with $| \eta_i| \leq \eta$.  Thus for each $i \in [n-1]$
\[ d(\x^n,\x^i)
     \begin{cases} < \sqrt{2-\delta}+ \eta < \sqrt{2} & \text{if  $in$ is an edge }\\
                             > \sqrt{2+ \delta} - \eta > \sqrt{2} & \text{ if not.}
     \end{cases}
\] 
This completes the proof by induction.
\end{proof}

%\dm{\textbf{Remark:}} \textit{This result shows that $\chi_c^{\max}(\R^d) \to \infty$ as $d \to \infty$.  For example, since the Ramsey number $R(3,k)$ satisfies $R(3,k)=\Theta(k^2/\log k)$, there exist $n$-vertex triangle-free graphs $G_n$ with stability number $O(\sqrt{n \log n})$  (see~\cite{Fiz} for the best known bounds) and thus with chromatic number and hence clique chromatic number $\Omega({\sqrt{n/\log n}})$.  Hence
%\[ \chi_c^{\max}(\R^d) = \Omega(d^{\frac14} (\log d)^{-\frac12}) \;\; \to \infty \;\; \mbox{ as } \; d \to \infty. \] }

Let $\chi_c^{\max}(n)$ be the maximum value of $\chi_c(G)$ over all $n$-vertex graphs.  Since the Ramsey number $R(3,k)$ satisfies $R(3,k)=\Theta(k^2/\log k)$, there exist $n$-vertex triangle-free graphs $G_n$ with stability number $O(\sqrt{n \log n})$  (see~\cite{Fiz} for the best known bounds) and thus with chromatic number and hence clique chromatic number $\Omega({\sqrt{n/\log n}})$. (Recall that $\chi_c=\chi$ for a triangle-free graph.)  Hence
\begin{equation} \label{eqn.chicmaxn}
  \chi_c^{\max}(n) = \Omega({\sqrt{n/\log n}}) \;\; \mbox{ as } \; n \to \infty.
\end{equation}
It now follows from Proposition~\ref{prop.allgeom} that 
%\m{D: Should we maybe refer also to the discussion in the concluding remarks section?}
%\dm{\textbf{Remark:}} \textit{This result shows that $\chi_c^{\max}(\R^d) \to \infty$ as $d \to \infty$.  For example, since the Ramsey number $R(3,k)$ satisfies $R(3,k)=\Theta(k^2/\log k)$, there exist $n$-vertex triangle-free graphs $G_n$ with stability number $O(\sqrt{n \log n})$  (see~\cite{Fiz} for the best known bounds) and thus with chromatic number and hence clique chromatic number $\Omega({\sqrt{n/\log n}})$.  Hence
\begin{equation} \label{eqn.bigd}
\chi_c^{\max}(\R^d) = \Omega(\sqrt{d/\log d}) %\Omega(d^{\frac14} (\log d)^{-\frac12}). %\;\; \to \infty 
\;\; \mbox{ as } \; d \to \infty.
\end{equation}
This shows explicitly that $\chi_c^{\max}(\R^d) \to \infty$ as $d \to \infty$, though the lower bound here is rather a long way from the upper bound (roughly $d^{d/2}$) provided by Proposition~\ref{thm.detRd}.
(See also Section~\ref{sec.concl}, where we discuss $\chi_c^{\max}(\R^d)$ in paragraph~(2), and $\chi_c^{\max}(n)$ in paragraphs~(5) and (6).)
\smallskip

It is convenient to give one more deterministic result here, which we shall use in the proofs in the next section and in the final section. For the sake of completeness, we include the straightforward proof.
%\m{\CC Perhaps suppress the proof here, and say: The straightforward proof is given in the appendix to the version of the paper on arXiv~\cite{}}
%Let us note first that the Gr\"{o}tzsch graph is triangle-free on $11$ vertices and has chromatic number $4$, and thus clique chromatic number $4$.

\begin{lemma} \label{lem.detsmall}
Let the graph $G$ have $n$ vertices.  If $n \leq 5$ then $\chi_c(G) \leq 2$ except if $G$ is isomorphic to $C_5$ when $\chi_c(G)=3$.  If $n \leq 10$ then $\chi_c(G) \leq 3$. 
\end{lemma}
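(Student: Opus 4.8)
The plan is to split the proof into two essentially independent parts corresponding to the two claims, and in each part to reduce to a small finite check by exploiting structural observations about maximal cliques. The overall strategy is that a clique colouring requires a 2-colouring (or 3-colouring) in which every maximal clique contains vertices of at least two colours, so I want to identify the maximal cliques, understand when a simple colouring rule fails, and handle the few bad cases directly.

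For the first claim ($n \le 5$), I would begin by noting that a graph needs at least one edge to have $\chi_c > 1$, and $\chi_c \ge 3$ can only happen if a 2-colouring is impossible, i.e. every red/blue partition leaves some maximal clique monochromatic. I would first dispose of the easy observation that any triangle can always be 2-coloured on its own, and that maximal cliques of size 1 (isolated vertices) are ignored while maximal cliques of size 2 are edges. The cleanest route is to argue that for $n \le 5$ a 2-colouring fails only when the graph forces a monochromatic edge or triangle under every partition; one then checks that the only $n$-vertex graph with $n\le 5$ requiring three colours is $C_5$. Since $\chi_c = \chi$ for triangle-free graphs, any triangle-free $G$ on $\le 5$ vertices has $\chi_c = \chi \le 3$ with equality iff $G$ contains an odd cycle, and the only odd cycle on $\le 5$ vertices forcing $\chi = 3$ is $C_5$ (a triangle would contradict triangle-freeness, and $C_5$ is the unique $5$-vertex $2$-regular triangle-free graph). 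For graphs containing a triangle I would show a good 2-colouring always exists by a short case analysis on how the remaining $\le 2$ vertices attach.

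For the second claim ($n \le 10$), the key reduction is that we only need to rule out $\chi_c \ge 4$, equivalently show every such graph admits a proper clique 3-colouring. I would use the probabilistic/counting idea implicit in hypergraph 2-colouring: a maximal clique of size $k$ is monochromatic under a uniform random 3-colouring with probability $2\cdot 3^{-k}$ (allowing two of the three colours), but I expect the cleanest self-contained argument to be a greedy or union-bound style estimate. \emph{The main obstacle} will be that large maximal cliques are cheap to break but small maximal cliques (edges and triangles) are the dangerous ones, and there can be many of them on $10$ vertices; so a naive union bound over all maximal cliques with a fixed number of colours need not close. I would therefore lean on the structural fact that maximal cliques of size $2$ (dominating edges, i.e. edges in no triangle) and size $3$ are the constraints to watch, and argue that with $n \le 10$ one can always find a 3-colouring defeating all of them, possibly by first 2-colouring a maximal triangle-free-constraint subproblem and using the third colour as slack.

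Concretely, I would present the $n\le 10$ bound by a short extremal argument: assume for contradiction $\chi_c(G) \ge 4$; by the first part no induced subproblem on $\le 5$ vertices forces more than $3$ colours, and I would try to extend a $3$-colouring greedily, showing that the obstruction can only occur if the hypergraph of maximal cliques behaves like a tight design, which is impossible on so few vertices. \emph{I expect this extremal/greedy step to be where the real work lies}, and in the write-up one would either cite a known small-case bound on clique-chromatic numbers or carry out a finite (in principle computer-checkable) verification over the relevant family of graphs on $10$ vertices; since the lemma is stated as ``straightforward'' and used only as a black box later, a clean union-bound or greedy argument with explicit constants should suffice rather than a full enumeration.
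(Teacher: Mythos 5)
Your first part is essentially sound: reducing to the triangle-free case (where $\chi_c=\chi$ and the unique obstruction on at most $5$ vertices is $C_5$) is correct, and the triangle case could indeed be finished by a short check, though the paper does it more efficiently by first observing that a vertex $v$ with $\deg(v)\ge 3$ lets one colour $N(v)$ with colour $1$ and everything else with colour $2$: no maximal clique can lie inside $N(v)$, since $v$ would extend it, and the at most two vertices outside $N(v)$ include $v$ and span no edge; this leaves only maximum degree at most $2$ to check.

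The second part, however, has a genuine gap: for $n\le 10$ you never actually produce an argument. You correctly concede that a union bound over maximal cliques fails (edges lying in no triangle are each monochromatic under a uniform random $3$-colouring with probability $1/3$ --- incidentally, the monochromatic probability for a $k$-clique is $3^{1-k}$, not $2\cdot 3^{-k}$ --- and already $C_5$ defeats the naive bound), but the replacement you offer, a greedy extension whose failure would force the maximal-clique hypergraph to ``behave like a tight design,'' has no mechanism behind it, and your final fallback is to cite an unnamed known bound or run an unperformed enumeration over graphs on at most $10$ vertices. The missing idea, on which the paper's proof is built, is the neighbourhood-deletion inequality $\chi_c(G)\le \chi_c(G\setminus N(v))+1$: giving all of $N(v)$ a fresh colour is safe because any clique contained in $N(v)$ extends by $v$, hence is not maximal. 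In a minimal counterexample with $\chi_c>3$ one first forces minimum degree at least $3$; then for any $v$ with $\deg(v)\ge 4$ the graph $G'=G\setminus N(v)$ has at most $5$ vertices and satisfies $\chi_c(G')\ge 3$, so by the first part $G'\cong C_5$ --- impossible, since $v$ is isolated in $G'$. Brooks' theorem handles the case of maximum degree $3$, and what survives at $n=10$ is exactly one hard configuration: $\deg(v)=4$ with $G'$ the disjoint union of $v$ and a $C_5$, which the paper disposes of via two short explicit $3$-colourings, split according to whether some $u\in N(v)$ forms a triangle with an edge of the $C_5$. Without this reduction, or a genuine substitute for it, your sketch does not constitute a proof of the $n\le 10$ claim.
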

\begin{proof}
Suppose that $n \leq 5$.  If $\deg(v) \geq 3$ then colouring $N(v)$ with colour 1 and the other vertices with colour 2 shows that $\chi_c(G) \leq 2$: thus we may assume that each degree is at most 2.  If $G$ has a triangle then $G$ consists of a triangle perhaps with one additional disjoint edge, so $\chi_c(G) \leq 2$. If $G$ does not have a triangle, then either $G$  is isomorphic to $C_5$ or $\chi_c(G) \leq \chi(G) \leq 2$.  Also, since $C_5$ has no triangles, $\chi_c(C_5)=\chi(C_5)=3$.  This completes the proof of the first statement.

Now let us prove that $\chi_c(G) \leq 3$ for $n \leq 9$. Suppose for a contradiction that $n \leq 9$ and $\chi_c(G) >3$, and $n$ is minimal such that this can happen.  The minimum degree in $G$ is at least 3 (for if $\deg(v) \leq 2$ and $\chi_c(G-v) \leq 3$ then $\chi_c(G) \leq 3$).
 
Suppose that $\deg(v) \geq 4$ for some vertex $v$, and let $G'=G \setminus N(v)$.  Then $|V(G')| \leq 5$ and $\chi_c(G') \geq 3$ (since $\chi_c(G) \leq \chi_c(G')+1$).  Thus by the first part of the lemma, $\deg(v)=4$ and $G'$ is isomorphic to $C_5$.  But now $v$ has neighbours in $G'$, a contradiction.

It follows that $G$ is cubic.  Hence $n$ is even, and so $n \leq 8$.  Now let $v$ be any vertex and as before let $G'=G \setminus N(v)$.   Arguing as before, we must have $\chi_c(G') \geq 3$ so $G'$ is isomorphic to $C_5$ and $v$ has neighbours in $G'$, a contradiction.
\smallskip

It remains only to show that $\chi_c(G) \leq 3$ when $n =10$.  As above, we may assume that $G$ is connected and the minimum degree in $G$ is at least $3$.  If $G$ has a vertex $v$ with $\deg(v) \ge 5$, then 
by the case $n=4$ of the lemma,
%Lemma~\ref{lem.detsmall}, 
$G'=G \setminus N(v)$ satisfies $\chi_c(G') \le 2$, since $G'$ consists of an isolated vertex and a 4-vertex graph:
%is not $C_5$, as $v$ is isolated in $G'$. 
but now, using the third colour for each vertex in $N(v)$, we see that $\chi_c(G) \le 3$. 
If each vertex has degree at most 3 then $\chi_c(G) \le \chi(G)\le 3$ by Brooks' theorem (since $G$ is connected and is not $K_4$).

Now we may assume, without loss of generality, that $G$ has a vertex $v$ with $\deg(v)=4$. Since $v$ is isolated in $G'=G \setminus N(v)$, by the case $n=5$ of the lemma,
%first part of Lemma~\ref{lem.detsmall}, 
$\chi_c(G') \le 2$ (and thus as before $\chi_c(G) \le 3$) unless $G'$ is the disjoint union of the vertex $v$ and the $5$-cycle $C = v_1,\ldots,v_5$ with edges $v_i v_{i+1}$ (where $v_6$ means $v_1$). Assume that $G'$ is indeed of this form.  We now have two cases.

\textbf{Case 1: }\textit{there are adjacent vertices $v_i$, $v_{i+1}$ in the cycle $C$ that form a triangle with some vertex $u \in N(v)$.} \\
We may $3$-clique colour $G$ as follows.  Without loss of generality, assume that $i=2$. Give colour 1 to $v$, $v_2$, $v_{3}$ and $v_5$; give colour 2 to $v_1$ and $v_{4}$; and give colour 3 to each vertex in $N(v)$.
Let $K$ be a monochromatic clique of size at least 2.  If $K$ has only colour 1, then $K$ cannot contain $v$ or $v_5$ (since they have no neighbours coloured 1), so we can add $u$ to $K$; $K$ cannot have only colour 2 (since the vertices coloured 2 form a stable set); and if $K$ has only colour 3 then we can add $v$ to $K$.

\textbf{Case 2: }\textit{no two vertices in the cycle $C$ form part of a triangle.} \\
Each vertex $u \in N(v)$ can be adjacent to at most two (non-adjacent) vertices in $C$, and every vertex $v_i$ in $C$ has at least 
$1$ and at most $2$ neighbours in $N(v)$.  Hence some vertex in $C$ has exactly one neighbour in $N(v)$: without loss of generality, assume that $v_1$ has exactly one neighbour, say, $u_1$ in $N(v)$.  Note that $u_1$ is not adjacent to $v_2$ or $v_5$: since $u_1$ is adjacent to at most one of $v_3, v_4$ we may assume, without loss of generality, that $u_1$ is not adjacent to $v_4$. Give colour 1 to $u_1, v_2,v_4$; give colour 2 to $v, v_3, v_5$; and give colour 3 to $v_1$ and each vertex in $N(v) \setminus \{u_1\}$. 

As in the first case, let $K$ be a monochromatic clique of size at least 2.  Then $K$ cannot be only colour 1 or only colour 2, since the vertices coloured 1 and the vertices coloured 2 both form stable sets; and if $K$ has only colour 3 then $v_1 \not\in K$ (since $v_1$ has no neighbours coloured 3) so we can add $v$ to $K$.
\end{proof}

%\m{C: from previous proof `Note that any monochromatic clique inside $N(v)$ is never a maximal clique, so we may ignore edges inside $N(v)$' - not necessarily true}

The Gr\"{o}tzsch graph is triangle-free on $11$ vertices and has chromatic number $4$, and thus has clique chromatic number $4$.
Since $\chi_c^{\max}(10)=3$ by the last result, it follows that $\chi_c^{\max}(11)=4$. 
%Hence, by the last result, $\chi_c^{\max}(n)$ is 2 for $n=3,4$ then 3 for $n=5,\ldots,10$ and then $4$ for $n=11$.  
Indeed, we may deduce easily that 
\begin{equation} \label{eqn.chicmax4}
\chi_c^{\max}(n)=4 \;\; \mbox{ for } n=11,\ldots,16.
\end{equation}
In order to see it, suppose $G$ is connected and has $n \leq 16$ vertices: we must show that $\chi_c(G) \leq 4$.  If each vertex has degree at most 4 then $\chi(G) \leq 4$ by Brooks' Theorem, and so $\chi_c(G) \leq 4$.  If some vertex $v$ has degree at least 5 then $G'=G \setminus N[v]$ has at most 10 vertices, so $\chi_c(G) \leq 1+ \chi_c(G') \leq 4$.

%%%%%%%%%%%%%%%%%%%%%%%%%%%%%
%%%%%%%%%%%%%%%%%%%%%%%%%%%%%

\section{Random results}\label{sec.randproofs}

In this section we prove Theorem~\ref{thm.RGGsummary} and Proposition~\ref{prop.RGG4}. We use one preliminary lemma that concerns the appearance of small components in the random geometric graph $G$.  It is taken from Chapter 3 of~\cite{Penrose}, where it is proved using Poisson approximation techniques.
\begin{lemma} \label{lem.comps}
Let $k \geq 2$ be an integer, let $H$ be a connected unit disk graph with $k$ vertices, and let $\mu=\mu(H)>0$ be the constant defined in equation (3.2) in~\cite{Penrose}.
\begin{enumerate}
\item If $nr^{2(k-1)} \to 0$ then \whp\ $G$ has no component with $k$ or more vertices. 
\item If $nr^{2(k-1)} \to c$ where $0<c<\infty$ then the expected number of components isomorphic to $H$ tends to $\mu c$, and the probability that $G$ has  such a component tends to $1- e^{- \mu c}$.
\item If $nr^{2(k-1)} \to \infty$ and $r \to 0$ then \whp\ $G$ has a component $H$.
\end{enumerate}
\end{lemma}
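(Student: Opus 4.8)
The plan is to prove all three parts by the moment method, which is the standard route to results on small components of random geometric graphs (and is exactly what underlies the Poisson approximation in Chapter 3 of~\cite{Penrose}). The central object is $X = X_H$, the number of components of $G$ isomorphic to $H$, written as a sum of indicators $X = \sum_{S} \mathbf{1}[S \text{ is an isolated copy of } H]$ over the unordered $k$-subsets $S$ of the $n$ points, where ``$S$ is an isolated copy of $H$'' means that the induced unit disk graph on $S$ is isomorphic to $H$ and no other of the $n$ points lies within distance $r$ of $S$.

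First I would pin down the first moment. Since $H$ is connected, any realising configuration has diameter $O(r)$; fixing one point of $S$ and integrating the remaining $k-1$ points over the set of configurations that realise $H$ (a set of Lebesgue measure $\Theta(r^{2(k-1)})$) gives the shape contribution, while the isolation constraint contributes a factor $(1 - \Theta(r^2)/n)^{n-k} = e^{-\Theta(r^2)}(1+o(1))$. Combining these with $\binom{n}{k}$ and the automorphism count yields $\E[X] = (1+o(1))\,\mu\, n r^{2(k-1)}$, where $\mu = \mu(H)$ is precisely the shape integral of equation (3.2) in~\cite{Penrose}. Here I use that $r \to 0$ in every relevant regime (this follows from $nr^{2(k-1)} \to c < \infty$ in part~(2), and is assumed in part~(3)), so that $e^{-\Theta(r^2)} \to 1$ and the boundary of the square $\SR$ distorts the integral only by a lower-order term.

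Given this estimate the three parts follow by standard arguments. For~(1), if $nr^{2(k-1)} \to 0$ then $\E[X] \to 0$; summing the analogous bounds $\E[X_j] = O(n r^{2(j-1)})$ over all component sizes $j \ge k$ (a series dominated by its first term, since $r \to 0$) shows that the expected number of vertices in components of size at least $k$ tends to $0$, so by Markov's inequality \whp\ there is no such component. For~(2), I would show that $X$ converges in distribution to $\Po(\mu c)$, whence $\pr(X \ge 1) \to 1 - e^{-\mu c}$ and $\pr(X=0) \to e^{-\mu c}$; the cleanest route is the Chen--Stein method, bounding the total variation distance between $X$ and $\Po(\E X)$ by the local-dependence contributions from pairs of $k$-subsets that share a point or lie within distance $O(r)$ of one another, which are $o(1)$ because the interaction range is $O(r)$ and $r \to 0$. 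For~(3), if $nr^{2(k-1)} \to \infty$ then $\E[X] \to \infty$, and a second-moment computation shows $\E[X(X-1)] = (1+o(1))(\E X)^2$, since two disjoint isolated copies are asymptotically independent; thus $\mathrm{Var}(X) = o((\E X)^2)$ and Chebyshev's inequality gives $\pr(X \ge 1) \to 1$.

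The hard part will be the dependence control in the Chen--Stein step of part~(2): one must verify that the indicator events for distinct $k$-subsets are genuinely only locally dependent, and that the boundary of $\SR$ does not distort the count. A convenient device is to first work in the Poissonised model, replacing the fixed number $n$ of points by a $\Po(n)$ number, so that configurations in disjoint regions become independent and the Chen--Stein bound is essentially immediate, and then to de-Poissonise by standard coupling arguments (as in~\cite{Penrose}). The remaining work --- explicit evaluation of the shape integral $\mu(H)$, and the verification that edge effects near $\partial \SR$ are negligible --- is routine.
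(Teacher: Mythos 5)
The paper does not actually prove this lemma: it is quoted directly from Chapter~3 of Penrose's book~\cite{Penrose}, where it is established by precisely the route you outline --- Poissonization, a first-moment computation giving $\E X = (1+o(1))\,\mu\, n r^{2(k-1)}$ (with $r \to 0$ automatic in parts (1) and (2) and assumed in part (3)), Chen--Stein Poisson approximation for part~(2), and Markov/second-moment arguments for parts~(1) and~(3). Your sketch is correct in outline and matches the cited source's approach, so there is nothing in the paper itself to compare it against.
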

\noindent
 (In part (2) above, the number of components isomorphic to $H$ in fact converges in distribution to Poisson$(\mu c)$.) % but we shall not use that fact here.)
We may now prove Theorem~\ref{thm.RGGsummary}, taking the parts in order.  We shall use the last lemma several times,  sometimes without explicit reference.

\medskip

\noindent
\emph{Proof of Theorem~\ref{thm.RGGsummary}}
\smallskip

\emph{Part} (1). The expected number of edges is asymptotic to $\binom{n}{2}\pi r^2/n \sim (\pi/2) \, nr^2$. Thus by Markov's inequality, if $nr^2 \to 0$ then \whp\ $G$ has no edges 
 so $\chi_c(G)=1$.  (This also follows from Lemma~\ref{lem.comps} part (1) with $H$ as the complete graph $K_2$.)
\smallskip

\emph{Part} (2).
If $nr^2 \to c$ where $0<c<\infty$, then the expected number of edges tends to $\mu c$, where $\mu=\mu(K_2)= \pi /2$ (edge-effects are negligible). Also, since $nr^4 \to 0$, \whp\ each component has size at most 2, and so $\chi_c(G) \leq 2$. Hence $\pr(\chi_c(G)=1) = \pr(G \mbox{ has no edges}) \to e^{-\mu c}$, and $\pr(\chi_c(G) = 2) \sim \pr(G \mbox{ has an edge}) \to 1- e^{-\mu c}$.
\smallskip

\emph{Part} (3).
If $nr^2 \to \infty$ then \whp\ $G$ has an edge (and indeed $G$ has at least one component that is an isolated edge), so $\chi_c(G) \geq 2$. If $nr^8 \to 0$ then \whp\ each component of $G$ has size at most 4, and then $\chi_c(G) \leq 2$ by Lemma~\ref{lem.detsmall}.  These two results combine to prove Part (3).
\smallskip

\emph{Part} (4).
If $nr^8 \to c$ (where $0<c<\infty$), then 
%by Lemma~\ref{lem.comps} 
the probability there is a component $C_5$ tends to $1- e^{-\mu c}$, where $\mu=\mu(C_5)>0$. Also \whp\ $G$ has edges and each component has size at most 5.  Hence $\pr(\chi = 2) \sim \pr(G \mbox{ has no component } C_5) \to e^{-\mu c}$; and, using also Lemma~\ref{lem.detsmall}, $\pr(\chi = 3) \sim \pr(G \mbox{ has a component } C_5)  \to 1- e^{-\mu c}$.  
\smallskip

\emph{Part} (5).
If $nr^8 \to \infty$ and $r \to 0$, then 
%by Lemma~\ref{lem.comps} 
\whp\ $G$ has a component $C_5$, and so $\chi_c(G) \geq 3$. The following lemma covers the remainder of the relevant range of values for $r$.
%\m{omitted \dm{since we may then clearly assume that $(n/ \log n)\, r^8 \to \infty$ (and still $r \leq 0.46 \, \sqrt{\log n}$).} ok?}\m{D: Ok. For me it was just to make it clearer that everything is covered, but I am fine}

\begin{lemma}\label{lem.RGG2}
Let $G \in \RG$ with $(n/ \log n)\, r^8 \to \infty$ and $r \leq 0.46 \, \sqrt{\log n}$. Then $\chi_c(G) \geq 3$ \whp.
\end{lemma}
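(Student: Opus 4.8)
\textbf{The plan} is to exhibit \whp\ an induced subgraph of $G$ that is not $2$-clique-colourable, which forces $\chi_c(G) \geq 3$. The natural candidate, given the rest of the paper, is the $5$-cycle $C_5$, which has $\chi_c(C_5) = 3$ by Lemma~\ref{lem.detsmall}. Since we can no longer rely on $C_5$ appearing as an isolated component (the graph is dense enough here that components become large), the key idea must be that $C_5$ appears as an \emph{induced} subgraph in a position where it cannot be helped by the rest of the graph — or, more robustly, to find a subconfiguration whose maximal cliques force three colours regardless of how the surrounding vertices are coloured. An obstruction of the first kind alone is not automatically enough: an induced $C_5$ inside a larger graph need not force $\chi_c \geq 3$, because a maximal clique of the whole graph containing two consecutive cycle-vertices may include outside vertices that break monochromaticity. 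So the configuration has to be chosen carefully.

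First I would fix a suitable target configuration. One clean approach: locate five points forming a (geometric) $C_5$ such that, in a small disk around them, there are no other points of $G$ — i.e.\ the $C_5$ sits in an empty region of radius a bit more than $r$, so that every maximal clique of $G$ meeting the cycle is entirely contained in the cycle. Then the induced $C_5$'s maximal cliques are exactly its edges, and no $2$-colouring avoids a monochromatic edge across the whole cycle (odd cycle), giving $\chi_c(G) \geq 3$. The existence of such a ``locally isolated'' $C_5$ \whp\ is precisely a first-moment/second-moment computation. I would tile $\SR$ into $\Theta(n/r^2)$ cells of side $\Theta(r)$, and in each cell estimate the probability that it contains a valid $C_5$-witness with an appropriate empty annulus. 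The expected number of such witnesses scales like $n \cdot r^8$ (five points in a region of area $\Theta(r^2)$ gives $r^8$ relative to an additional free point, matching the paper's $nr^8$ threshold) times a factor $e^{-\Theta(r^2)}$ coming from the probability that the surrounding annulus of area $\Theta(r^2)$ is empty.

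The constant $0.46$ is exactly where the emptiness-penalty enters. The empty-annulus probability is roughly $(1 - \Theta(r^2)/n)^n \approx e^{-\Theta(r^2)}$, while the number of candidate locations is $\Theta(n)$; for the expectation to tend to infinity we need $n r^8 e^{-\gamma r^2} \to \infty$ for the relevant constant $\gamma$ (the area of the annulus that must be empty). Writing $r^2 = \beta \log n$, this requires $1 + 4\beta' - \gamma \beta > 0$ in the exponent of $n$, which yields a threshold on $\beta$, and the number $0.46$ is chosen so that $r \leq 0.46\sqrt{\log n}$ keeps us below it. The hypothesis $(n/\log n) r^8 \to \infty$ handles the lower end, ensuring the polynomial-in-$r$ factor does not kill the expectation when $r$ is small. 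So I would: (i) set up the tiling and define the witness event per cell; (ii) compute the expected number of witnesses and show it tends to infinity precisely under the two stated hypotheses, with the constant $0.46$ emerging from the area computation; (iii) run a second-moment argument (witnesses in well-separated cells are independent, and nearby cells contribute a lower-order correlation term) to conclude that \whp\ at least one witness exists.

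\textbf{The main obstacle} I expect is the second-moment (or alternative concentration) step combined with pinning down the exact constant. Controlling the variance requires showing the emptiness events for distant witnesses are nearly independent while the overcount from overlapping annuli is negligible — standard for random geometric graphs but fiddly to make rigorous across the whole range $(n/\log n)r^8 \to \infty$ up to $r \leq 0.46\sqrt{\log n}$, since the behaviour of the empty-region probability transitions as $r$ grows. A cleaner alternative, which I would pursue if the variance is unwieldy, is to use a direct exposure/sprinkling argument: partition the plane into independent cells and apply a Poisson-approximation or Chernoff bound to the number of successful cells, as in the technique behind Lemma~\ref{lem.comps}. The delicate point in either route is verifying that the constant $0.46$ is consistent with whatever empty region one actually demands; I would aim to make the required empty region as small as possible (just enough to isolate the $C_5$'s maximal cliques) so that the emptiness penalty, and hence the admissible constant, is as favourable as $0.46$ permits.
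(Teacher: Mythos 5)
Your overall architecture is the paper's: Poissonize, tile $\SR$ into $\Theta(n/r^2)$ independent cells, plant in each cell a pentagon configuration realising $C_5$ together with a mandatory empty region, bound the failure probability by $(1-q_n)^{\Theta(n/r^2)}$ with $q_n=\Theta(r^{10}e^{-\gamma r^2})$, and do the exponent bookkeeping with $r^2=\beta\log n$; you also correctly observe that an induced $C_5$ alone proves nothing and that the constant $0.46$ must come from the area $\gamma$ of the region you force to be empty. The gap is that the witness you actually propose --- a $C_5$ sitting inside an empty ball ``of radius a bit more than $r$'', so that every maximal clique meeting the cycle lies in the cycle --- demands far more emptiness than necessary and cannot deliver the stated constant. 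The paper's key idea is that one only needs \emph{no edge of the $C_5$ to lie in a triangle}: for each edge $\bv_i\bv_{i+1}$, the lens-shaped intersection $B_i$ of the two unit-radius disks centred at $\bv_i$ and $\bv_{i+1}$ must contain no other point, while outside vertices may freely attach to single cycle vertices. Each edge is then itself a maximal clique, and the odd cycle defeats any $2$-colouring. For the regular pentagon with unit diagonals, the union of the five lenses has area $A\approx 4.6334$, and the lemma holds precisely because $0.46 < A^{-1/2}\approx 0.4646$; since $0.46^{-2}\approx 4.726$, there is almost no slack.

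By contrast, your region --- all points within distance roughly $1$ (at unit scale) of the five pentagon vertices, which is what full isolation of the cycle requires --- is the union of the five unit disks, with area about $6.5$; this caps the admissible constant near $1/\sqrt{6.5}\approx 0.39$, so your construction proves the lemma only with $0.39$ in place of $0.46$. You do flag this tension and say you would shrink the region to ``just enough to isolate the $C_5$'s maximal cliques,'' which is exactly the right instinct, but identifying the minimal controlled region and computing its area (the trigonometric Claim occupying the bulk of the paper's proof) \emph{is} the substantive content here; without it the bound $r\leq 0.46\sqrt{\log n}$ is not established. One smaller point: if you go via Poissonization rather than a second-moment argument in the uniform model, the Poisson-model failure probability must be shown to be $o(n^{-1/2})$, not merely $o(1)$, before transferring back to $\RG$; your Chernoff-on-independent-cells route does give this, but it should be stated.
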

In order to simplify the proof of Lemma~\ref{lem.RGG2} we will make use of a technique known as Poissonization, which has many applications in geometric probability (see~\cite{Penrose} for a detailed account of the subject). Here we sketch all we need. Consider the related model of a random geometric graph $\RGuv$, where the set of points is given by a homogeneous Poisson point process of intensity $1$ in the square $\SR$ of area $n$. In other words, we form our graph from $N$ points in the square $\mathcal S_n$ chosen independently and uniformly at random, where $N$ is a Poisson random variable of mean $n$. 

The main advantage of generating our points by a Poisson point process arises from the following two properties: (a) the number of points that lie in any region $A\subseteq\SR$ of area $a$ has a Poisson distribution with mean $a$, and the numbers of points in disjoint regions of $\SR$ are independently distributed; and (b) by conditioning $\RGuv$ on the event $N=n$, we recover the original distribution of $\RG$. Therefore, since $\Pr(N=n)=\Theta(1/\sqrt n)$, any event holding in $\RGuv$ with probability at least  $1-o(n^{-\frac12})$ must hold \whp\ in $\RG$.

\begin{proof}[Proof of Lemma~\ref{lem.RGG2}]
Our plan is to show that \whp\ $G$ contains a copy of $C_5$ such that no edge of this copy is in a triangle in $G$, and so $\chi_c(G) \geq 3$.  In order to allow $r$ to be as large as  possible %the upper limit stated, 
we consider a configuration of 5 points such that the corresponding unit disk graph is $C_5$, and the area $A$ that must contain no further points (to avoid unwanted triangles) is as small as possible.

We work in the Poisson model $\RGuv$. Within $\SR$ choose 
$(\lfloor \sqrt{n}/4r \rfloor)^2$
 disjoint square cells which are translates of $[0,4r)^2$.
%Tessellate $\SR$ into square cells of side length $4r$ (the last row and last column might contain cells of width or length of up to $8r$, in case such a tessellation is not possible, we can safely ignore these).
For each of these cells, we shall consider a regular pentagon $Q$ centered at the center of the cell and contained well within the cell.

Consider first the square $[-2,2)^2$. Start with a regular pentagon, with extreme points listed clockwise as $\bv_1,\ldots,\bv_5$ around the boundary, centred on the origin ${O}=(0,0)$, and scaled so that the diagonals (for example $\bv_1 \bv_3$) have length 1. The angle $\bv_1 O \bv_2$ is $2 \pi/5$, and the line $O \bv_2$  is orthogonal to the line $\bv_1 \bv_3$ and bisects it.  Hence the radius (from the centre ${O}$ to each extreme point $\bv_i$) is $a:= |O \bv_1| = 1/(2\sin \frac{2\pi}{5})  \approx 0.525731$. (We give numbers rounded to 6 decimal places.) If $T$ is the midpoint of the side $\bv_1 \bv_2$, then the line $OT$ is orthogonal to $\bv_1 \bv_2$ and the angle $\bv_1 OT$ is $\pi/5$.  Hence the side length~$s$ (the length of $\bv_1 \bv_2$ for example) satisfies $\frac{s}{2a} = \sin ( \pi/5)$, so $s= \sin(\pi/5)/\sin(2\pi/5) = 1/(2\cos \frac{\pi}{5}) \approx 0.618034$. For each successive pair $\bv_i \bv_{i+1}$ of extreme points (including $\bv_5 \bv_1$), let $B_i$ be the intersection of the unit radius disks centred on $\bv_i$ and $\bv_{i+1}$; and let the `controlled region' $B$ be the union of the $B_i$, with area~$A$.  For the value of $A$, we have the following claim. 

\begin{figure}[h]
\begin{center}
\includegraphics[width=3in,height=3in]{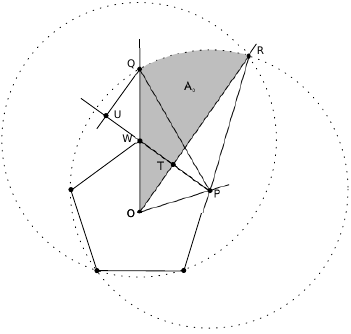}
\end{center}
\caption{Configuration of points in the proof of the claim}
\label{fig3}
\end{figure}

\medskip

\noindent
\textbf{Claim:} $A \approx 4.633376$.

\begin{proof}[Proof of the claim]
We may calculate $A$ as follows.
Let us take $\bv_1$ to be on the $y$-axis above the origin $O$, so $\bv_1=(0,a)$. 
Now $\bv_2=(a \cos \frac{\pi}{10}, a \sin \frac{\pi}{10})$.
Let us denote $\bv_1$ by $W$ and $\bv_2$ by $P$.

Suppose that the circle $C$ of radius 1 centred on $P$ meets the lines $x=0$ (on which $W$ lies) and $y= (\tan\frac{3 \pi}{10}) x$ (bisecting the angle between $OW$ and $OP$) above the $x$-axis at $Q$ and $R$ respectively.  Then the area $A$ is $10 A_0$, where $A_0$ is the area bounded by these two straight lines and the arc of the circle $C$ between $Q$ and $R$ -- see the shaded area on Figure~\ref{fig3}.  We may calculate $A_0$ as the area $A_0^1$ of the sector of the circle bounded by the arc between $Q$ and $R$ and the radii $PQ$ and $PR$, less the area $A_0^2$ of triangle $OPR$, plus the area $A_0^3$ of triangle $OPQ$. Recall that $T$ is the point of intersection of the lines $PW$ and $OR$, and note that $|PT|=|WT|=s/2$ and $PW$ and $OR$ are orthogonal.
Now (by Pythagoras' theorem) $\, |OT|^2=a^2 - (s/2)^2$ and $|TR|^2= 1-(s/2)^2$; and so $|OT| = a \sqrt{1-\sin^2 (\pi/5)} \approx 0.425325$ and $|TR| = \sqrt{1 - a^2 \sin^2 (\pi/5)} \approx 0.951057$.

Drop a perpendicular from $Q$ to the (extended) line $PW$, meeting the line at~$U$. Note that the angle $UWQ$ is $\frac{3\pi}{10}$, and so $|UW|=|QU| \cot(3\pi/10)$. Hence, by considering the triangle $PQU$  in which $|PQ|=1$, $|UQ|$ is the positive solution $h$ of the quadratic equation 
\[ \big( s + \cot(3\pi/10) h \big)^2 + h^2 = 1;\]
and thus we obtain $h \approx 0.406737$. It follows that the angle $WPR$ is $\alpha = \arcsin |TR| \approx 1.256637$, the angle $WPQ$ is $\gamma = \arcsin |UQ| \approx 0.418879$, and so
$$
A_0^1 = \frac {\alpha - \gamma}{2} \approx 0.418879. 
$$
Moreover, 
$$
A_0^2 = \frac {s |OT|}{4} + \frac {s |TR|}{4} \approx 0.212663,
$$
$$
A_0^3 = \frac {s |UQ|}{2} + \frac {s |OT|}{2} \approx 0.257121,
$$
and so $A = 10 A_0 = 10 (A_0^1 - A_0^2 + A_0^3) \approx 4.633376$.
\end{proof}

\par

\medskip

We continue with the proof of Lemma~\ref{lem.RGG2}. Let $0<b<A^{-\frac12} \approx 0.464570$; and let $r=r(n)$ satisfy $(n/ \log n)\, r^8 \to \infty$ and $r \leq b \sqrt{\log n}$.  As indicated earlier, we shall show that \whp\ $G$ contains a copy $J$ of $C_5$ such that no edge of $J$ is in a triangle in $G$, and so $\chi_c(G) \geq 3$.

Choose $\eps>0$ sufficiently small that $\eta := 1- (1+\eps)^2 b^2 A >0$, the region $(1+\eps)B$ is contained in the ball centred on $O$ with radius $2$, 
and $\eps < (1+\eps) s < 1-\eps$.
Scale up by a factor $1+\eps$,  and use the notation $\bv'_i$, $A'$, $B'$ to refer to the rescaled case.
Note that %\dm{by our assumption on $(1+\eps) B$, the newly scaled} 
$B'$ is contained in $[-2,2)^2$ (by our assumption on $(1+\eps) B$).  Put small open balls of radius $\eps/2$ around the five extreme points $\bv'_i$ of the pentagon, and note that these small balls are all disjoint (since $(1+\eps) s > \eps$).
%\m{moved `Now rescale by $r$' later}
 If $\x$ and $\y$ are points in the small balls at non-adjacent vertices $\bv'_i$ and $\bv'_j$ then $d(\x, \y) > 1$ (since $d(\bv'_i,\bv'_j)=1+\eps$).  If $\x$ and $\y$ are points in the small balls at adjacent vertices $\bv'_i$ and $\bv'_{i+1}$ (where $\bv'_{6}$ means $\bv'_1$) then $d(\x, \y) < 1$ (since $(1+\eps)s + \eps<1$); and if ${\bf z} \not\in B'$ then either $d({\bf z},{\bf x})> d(\z, \bv'_i) - \eps/2 >1$ or similarly $d({\bf z},{\bf y})> 1$,
% d({\bf z}, \bv'_{i+1}) - \eps/2 >1$, 
so we do not get triangles involving a point $\z \not \in B'$.

%If $\x$ and $\y$ are points in the small balls at adjacent vertices then $d(\x, \y) < r$ \dm{(remember that in the original unit disk graph we had $s<1$, so there is some slack such that after rescaling by $(1+\eps)r$ two adjacent vertices remain at distance less than $r$)}. Also, if $\x$ and $\y$ are points in the small balls at non-adjacent vertices then $d(\x, \y) > r$ \dm{(recall that their distance in the original unit disk graph was exactly $1$)}; and if $\z \not\in B$, \dm{then for each successive pair $v_i v_{i+1}$ of extreme points of the pentagon (including $v_5 v_1$), } then \dm{$d(v_i, \z) >r$} or \dm{$d(v_{i+1}, \z)>r$ must hold after rescaling by $(1+\eps)r$} (so we do not get triangles containing a vertex $\z \not \in B$).
 
 Now rescale by $r$, and call the rescaled controlled region $B''$.  Note that the area of $B''$ is $(1+\eps)^2 r^2 A$. 
 If exactly one Poisson point $\x$ lies in each rescaled small ball and there are no other such points in $B''$ then we have a copy of $C_5$ as desired. 
%Let $\lambda = \pi(\eps r /2)^2$. 
Setting $\lambda = \pi(\eps r /2)^2$, the probability $q_n$ of this happening satisfies
\[ q_n = (\lambda e^{-\lambda})^5 e^{-r^2 ( (1+\eps)^2 A - 5 \pi (\eps/2)^2)} = \lambda^5 e^{-r^2  (1+\eps)^2 A}.\] 
Since events within different cells are independent, the probability $p_n$ that $G \in \RGuv$ has no $C_5$ as desired satisfies
%\m{was $(1-q_n)^{\lfloor \frac{n}{16r^2}\rfloor}$}
\[p_n \leq (1-q_n)^{(\lfloor \sqrt{n}/4r \rfloor)^2}
 \le \exp\left(-(1+o(1))\frac{q_n \,  n}{16r^2} \right). \] 
Observe that 
%\dmc{I put this sentence after the previous one (it was before). The second inequality below is equality, no?}
\[ r^2  (1+\eps)^2 A \leq b^2 (1+\eps)^2 A \log n = (1-\eta) \log n. \]
If $1 \leq r \leq b \sqrt{\log n}$ then 
%\m{\pt{P: we assume that $r \gg (\frac{\log n}{n})^{\frac18}$ so we do not need two cases?}}%\m{D: Pawel, I disagree}
\[ q_n n /r^2 = \lambda^5 e^{-r^2 (1+\eps)^2 A} \, n/r^2= \Omega(r^8 n^{\eta}) = \Omega(n^{\eta}), \]
and if 
%$(\frac{\omega(n) \log n}{n})^{\frac18} \leq r \leq 1$ where $\omega(n) \to \infty$, 
$(\frac{\log n}{n})^{\frac18} \ll r \leq 1$ then
\[ q_n n /r^2 =  \Omega(\lambda^5 n/r^2)= \Omega(r^8 n) \gg \log n.  \] %= \Omega( \omega(n) \log n).\]
Thus in both cases $p_n=o(n^{-\frac12})$.  It follows that the failure probability in the original $\RG$ model is $o(1)$, as required.
\end{proof}
\medskip

\emph{Part} (6) (of Theorem~\ref{thm.RGGsummary}).
The next lemma proves Part (6), and thus completes the proof of Theorem~\ref{thm.RGGsummary}.

%\cm{The constant $C  \approx 10.56$ is now improved to 9.3 (or indeed anything $> 9.261506$)} \m{$> 9.261506$}
\begin{lemma}\label{lem.RGG}
Let $G \in \RG$ with 
%\m{Pawel: to be consistent we could stay with two digits: 9.27} 
$r \geq 9.27 \, \sqrt{\log n}$.  Then $\chi_c(G)=2$ \whp.
\end{lemma}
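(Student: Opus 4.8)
The plan is to prove the two inequalities $\chi_c(G) \ge 2$ and $\chi_c(G) \le 2$ separately. The lower bound is immediate: since $r \ge 9.27\sqrt{\log n}$ lies well above the connectivity threshold $r_c = \sqrt{(\log n)/\pi}$, the graph $G$ has an edge \whp, and so $\chi_c(G) \ge 2$. All the work is in the upper bound, for which I would exhibit one explicit $2$-colouring and show that \whp\ no maximal clique is monochromatic.

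The key geometric observation is that the threshold distance forces every clique to be spatially concentrated, while maximality forces a whole disk-shaped region to be ``filled'' by the clique. Precisely: if $K$ is any clique then $\mathrm{diam}(K) \le r$, so by Jung's theorem $K$ is contained in some disk $B(\mathbf{c},\rho)$ with $\rho \le r/\sqrt{3}$. Writing $M(K) = \{\z : d(\z,\bv) \le r \text{ for all } \bv \in K\}$ for the set of locations adjacent to every vertex of $K$, the triangle inequality gives $M(K) \supseteq B(\mathbf{c}, r-\rho) \supseteq B(\mathbf{c}, (1-1/\sqrt{3})\,r)$. Thus $M(K)$ always contains a disk of radius $\rho_0 := (1-1/\sqrt{3})\,r \approx 0.42\,r$. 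Moreover, if $K$ is a \emph{maximal} clique, then every point of the process lying in $M(K)$ must already belong to $K$, for otherwise it could be added to $K$. Consequently, if the disk $B(\mathbf{c},\rho_0)$ contains process points of two different colours, then $K$ is not monochromatic.

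I would therefore work in the Poisson model $\RGuv$ (transferring to $\RG$ exactly as in the proof of Lemma~\ref{lem.RGG2}), fix a checkerboard $2$-colouring of $\SR$ into square cells of side $w$, coloured by the parity of the sum of the two cell indices, and choose $w$ to be a suitably small constant multiple of $r$ so that every disk of radius $\rho_0$ contains a full $2 \times 2$ block of cells, hence a full cell of each colour. It then suffices to show that \whp\ every cell contains at least one point of the process: granting this, every disk of radius $\rho_0$ meets both colour classes, and in particular so does the disk inside each $M(K)$, so no maximal clique is monochromatic and $\chi_c(G) \le 2$. Since a cell of area $w^2$ is empty with probability $e^{-w^2}$ and there are $O(n/w^2)$ cells, a union bound shows that \whp\ no cell is empty provided $w^2$ exceeds $\log n$ by a constant factor; cells meeting $\partial\SR$ are negligible because $r = o(\sqrt{n})$. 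Note that once this union bound succeeds, the ``both colours'' property becomes a deterministic statement about every disk of radius $\rho_0$, which is what makes the argument uniform over all possible positions of a clique.

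The main obstacle is quantitative rather than structural: one must reconcile the two opposing demands on the cell width $w$ — it must be at most a fixed multiple of $\rho_0 = \Theta(r)$ so that a disk of radius $\rho_0$ always captures both colours, yet large enough that $w^2 \gtrsim \log n$ so that \whp\ no cell is empty. Feeding $r \ge 9.27\sqrt{\log n}$ into these constraints, and being careful with the constants in Jung's bound, the $2\times 2$-block geometry, and the union bound over all cells, is exactly what pins down the admissible range of $r$ and yields the stated constant. I would expect the cleanest version of this argument to give a somewhat smaller admissible constant than $9.27$, so the remaining effort lies in choosing the colouring and the geometric estimates conservatively enough to keep every step rigorous.
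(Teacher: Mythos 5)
Your strategy is at heart the same mechanism as the paper's proof: tessellate $\SR$ into cells of diameter $\Theta(\sqrt{\log n})$, show that \whp\ every cell is occupied, and use maximality --- every process point within distance $r$ of all of a maximal clique $K$ must already belong to $K$ --- to force $K$ to swallow entire occupied cells carrying both colours. Where you genuinely differ, you improve on the paper in the bulk of the square. The paper locates the swallowed region via the midpoint $\y$ of a diameter pair of $K$, which only guarantees that your region $M(K)$ contains a ball of radius $(1-\sqrt{3}/2)r \approx 0.134\,r$, and it colours with both colours \emph{inside} each hexagonal cell, which is why it needs two points per cell; your Jung's-theorem bound gives a ball of radius $\rho_0=(1-1/\sqrt{3})r \approx 0.423\,r$ inside $M(K)$, about three times larger, and the checkerboard colouring needs only one point per cell. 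In the interior this yields: the inscribed axis-aligned square of the $\rho_0$-ball has side $\sqrt{2}\rho_0 \approx 0.598\,r$, the $2\times 2$-block condition needs $w \le \sqrt{2}\rho_0/3 \approx 0.199\,r$, and with $w^2=(1+\eps)\log n$ this works for all $r \ge 5.02\sqrt{\log n}$, comfortably below $9.27$. One technical remark: you do not need Poissonization, and at a tight constant you cannot afford it --- the paper's transfer requires failure probability $o(n^{-1/2})$ in $\RGuv$, which fails when $w^2=(1+\eps)\log n$ with small $\eps$; but directly in $\RG$ a cell of area $w^2$ is empty with probability $(1-w^2/n)^n \le e^{-w^2}$, and the same union bound goes through (the paper's own proof of this lemma likewise works directly in $\RG$).

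The genuine gap is at the boundary of $\SR$, and it bites exactly in the regime the lemma claims. The Jung centre $\mathbf{c}$ lies in the convex hull of $K$, hence in $\SR$, but it can sit at a corner, and then only a quarter of $B(\mathbf{c},\rho_0)$ is available: process points exist only in $\SR$, so your uniform claim that ``every disk of radius $\rho_0$ contains a full $2\times 2$ block'' is false there, and the remark that boundary cells are negligible because $r=o(\sqrt{n})$ does not repair it --- cells clipped by $\partial \SR$ can have arbitrarily small area, so the union bound cannot make them occupied, and the occupancy claim must be restricted to cells wholly inside $\SR$. Quantitatively, the largest axis-aligned square in a corner quarter-disk of radius $\rho_0$ has side $\rho_0/\sqrt{2}$, so the $2\times2$-block condition forces $w \le \rho_0/(3\sqrt{2}) \approx 0.0996\,r$, i.e.\ $r \ge 10.04\sqrt{\log n}$ --- strictly above the claimed threshold $9.27\sqrt{\log n}$, so as written your proof does not cover part of the stated range. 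The repair is easy and stays within your framework: you only need two \emph{orthogonally adjacent} full cells (a domino), which in a checkerboard have opposite colours, and any axis-aligned $3w \times 2w$ rectangle contains a full domino whatever the grid offset; such a rectangle fits in the corner quarter-disk once $\sqrt{13}\,w \le \rho_0$, i.e.\ $w \le 0.117\,r$, giving $r \ge 8.54\sqrt{\log n}$, which covers the stated range and even improves the constant, as you anticipated. For comparison, the paper dodges the boundary issue entirely by aligning its hexagonal cells with the sides of $\SR$ so that every boundary cell keeps at least half its area (and still gets two points \whp), and by using the cell containing the midpoint $\y \in \SR$, which is automatically a cell of the partition.
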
 

%Consider $\chi_c(G)$ for $G \in \RG$, increasing $r$ from 0. There is a trivial first `transition': if $nr^2 \to 0$ then a.a.s.\ $G$ has no edges, so $\chi_c(G)=1$; and if $nr^2 \to \infty$ then a.a.s.\ $G$ has edges, so $\chi_c(G)\geq 2$.  The lower bound in Theorem~\ref{lem.RGG2} is at the second transition.

\begin{proof}[Proof of Lemma~\ref{lem.RGG}]
Clearly $G$ has an edge \whp, and so $\chi_c(G) \geq 2$ \whp. Hence, we only need to show that $\chi_c(G) \leq 2$ \whp.  As in the proof of Proposition~\ref{thm.detR3}, start with a hexagonal packing in the plane, as in Figure 1, formed from the Voronoi cells (with vertical left and right sides) of the unit triangular lattice $T$ (where the edges have unit length).  The hexagonal cells have area $\sqrt{3}/2$ and diameter $2/\sqrt{3}$. 

Now rescale by the factor $\sqrt{ \frac{2}{\sqrt{3}}(1+\eps) \log n}$ for some suitably small $\eps > 0$. As a result, each cell has area $(1+\eps) \log n$ and diameter 
%$\frac{2}{\sqrt{3}} \cdot \frac{((1+\epsilon)2 \log n)^{\frac12}}{3^{1/4}} = 
$\delta:= ((1+\eps)8 \log n)^{\frac12} 3^{-3/4}$.  (For orientation, note that the lower bound on $r$ is (for small $\eps$) more than $7.4 \, \delta$.) By shrinking slightly in the $x$ and the $y$ directions, we may ensure that the left and right sides of the square $\SR$ lie along vertical sides of cells (more precisely, we may ensure that, as we move up the left side of the square, every second internal cell has its vertical left boundary along the side of the square, and every second one is bisected by the side of the square; and similarly for the right side of the square),
% every second cell touching the left and right side of the square $\SR$ coincides with the boundary of $\SR$), 
and each cell which meets a horizontal side of $\SR$ is at least half inside $\SR$.  We then obtain (at least for large $n$) a partition of the square $\SR$ such that each cell has diameter at most $\delta$, each `internal' cell not meeting the boundary has area at least $a= (1+\eps/2) \log n$, and each `boundary' cell meeting the boundary has area at least $a/2$. 
%\m{C: \dmc{why not saying $a=(1+\eps)\log n$?} `by shrinking slightly'}\m{D: Yes, my apologies, I forgot the shrinking, so better put $\eps/2$ here}
There are $O(n/\log n)$ internal cells and $O(\sqrt{n/\log n})$ boundary cells.

The probability that a given internal cell contains at most one point in its interior is at most 
%\dmc{again I would say exactly for internal cells with the different $a$, giving then $n^{-1-\eps+o(1)}$ and $n^{-\eps+o(1)}$ for the expected number of such cells}
\[ \left( 1 - \frac {a}{n} \right)^n + n \left( 1 - \frac {a}{n} \right)^{n-1} \frac {a}{n} \le n^{-1-\eps/2+o(1)}. \]
Since there are $O(n/\log n)$ internal cells, the expected number of such cells is $n^{-\eps/2+o(1)} = o(1)$. Similarly, the probability that a given boundary cell contains at most one point in its interior is at most $n^{-\frac12-\eps/4+o(1)}$; 
%\m{C: $-\eps/4$ looks right to me}
%\dmc{$n^{-\frac12-\eps/2+o(1)}$?}; 
and since there are $O(\sqrt{n/\log n})$ boundary cells, the expected number of such cells  is $n^{-\eps/4+o(1)} = o(1)$.
%\dmc{$n^{-\eps/2+o(1)}$?}. 
It follows from Markov's inequality that \whp\ all cells have at least two points in their interior. 

It suffices now to show (deterministically) that for each set of points in $\SR$ with at least two in the interior of each cell, the corresponding graph $G$ has $\chi_c(G) \leq 2$. To do this, we colour the vertices of $G$ arbitrarily as long as both colours are used in every cell: we shall show that this gives a proper clique-colouring. 
%Note that with probability $1$ We may assume no vertex lands at the boundary between cells.), and so we may assume this.) 

Observe that $G$ has no isolated vertices since $r$ is more than the diameter $\delta$ of a cell
%
% SHORTER VERSION
%(indeed, $r$ is much larger than $\delta$ and so the minimum degree is large -- assuming $n$ is large).
%
%OR, in more detail (more than wanted unless we can give a tighter estimate?)  
(indeed, $r > 7.4 \delta$ and so -- assuming $n$ is large -- the minimum degree may be shown to be at least 95, since in the triangular lattice there are 48 lattice points $(x,y) \geq (0,0)$ within graph distance 7 of $(0,0)$, and thus within Euclidean distance $6.1 \delta$,  so each point in each of these cells is at Euclidean distance $< r$ from each point in the cell corresponding to $(0,0)$).
%\m{D: Using $r \ge 7.464 \delta$, I obtain as lower bound $\frac{r^2 \pi /4}{3\sqrt{3}(r/7.464)^2/8} \approx 67$}
%\m{C: Dieter, how did you argue this?}
%\m{C: I think there are 37 lattice points $(x,y) \geq (0,0)$ in the unit triangular lattice within graph distance 6 of $(0,0)$, 48 within graph distance 7. All points within these cells are within Euclidean distance $7 +2/\sqrt{3} < 7.1 (2/\sqrt{3})$ of $(0,0)$. That yields a lower bound of 95 on the minimum degree.}
%
%\m{C: we do not need it, but we could add a comment like `(In fact, whp the minimum degree is large, more than 60)'? -- or perhaps 70?}\m{D: perhaps you can be more exact, I agree. I put what I could get}
%\m{C: suppressed definition of diameter - we talked about diameter earlier}
%Recall that the Euclidean diameter $D$ of a finite set of points
% subset of vertices $S$ (together with its corresponding positions in the plane) 
%is the maximal Euclidean distance between any pair of points in the set. % vertices of $S$. 
Consider any maximal clique $K$ in $G$ with corresponding Euclidean diameter $D$ (so $0< D \le r$), and suppose that $D$ is attained for the Euclidean distance between the points $\bu$ and $\bv$ corresponding to vertices $u$ and $v$ in $K$. Let $\y$ be the midpoint of the line joining $\bu$ and $\bv$, and let the cell $C$ contain $\y$.
% in the plane that is at distance precisely $D/2$ from both $u$ and $v$. 
%(Of course, with probability 1, $y$ is not a point of $G$ nor is at the boundary between cells, and so we may assume this.) 
%\m{not needed now: (We may assume that $y$ is not a point of $G$ nor is at the boundary between cells.)}
Since for each vertex $w$ in the clique $K$, the corresponding point $\bw$ is at distance at most $D$ from both $\bu$ and $\bv$, it follows that $\bw$ is at distance at most $\sqrt{D^2 - (D/2)^2} = \sqrt{3} D /2$ from $\y$. Hence if
\begin{equation}\label{eq:Dbound}
\sqrt{3} D /2 + \delta \le r,
\end{equation}
then every point of the cell $C$ is at distance at most $r$ from all points of $K$.  Since $K$ is maximal, all vertices corresponding to points that belong to the cell must be in $K$, %(otherwise, $K$ would not be maximal)
 and so $K$ is not monochromatic. Since $D \le r$, the desired inequality~\eqref{eq:Dbound} holds as long as $\sqrt{3} r/2 + \delta \le r$, which is equivalent to $r \geq 4 \delta (1+ \sqrt{3}/2)$. But
\[ 4 (1+ \sqrt{3}/2) 8^{\frac12} 3^{-3/4} = (1+ \sqrt{3}/2) 2^{7/2}3^{-3/4} = 9.261506  \]
to 6 decimal places.  Thus, by choosing $\eps>0$ sufficiently small, we see that it suffices to have $r \geq 9.2616 \sqrt{\log n}$. 
\end{proof}

We have completed the proof of Theorem~\ref{thm.RGGsummary}. It remains to prove Proposition~\ref{prop.RGG4}. The first part of that result follows directly from Lemmas~\ref{lem.detsmall} and~\ref{lem.comps}, since we already know that $\chi_c(G) \geq 3$ \whp, and the latter lemma shows that \whp\ $G$ has no components with more than 10 vertices.  The second part will follow easily from the next lemma, by considering a connected geometric graph $H$ such that $\chi_c(H)= \chi_c^{\max}(\R^2)$.  %The lemma gives further information.
\begin{lemma} \label{lem.allcomps}
Let $h \geq 2$ and let $H$ be any given connected geometric graph with $h$ vertices.  %Let $r=r(n)$ satisfy
%\[ \omega(n) \, n^{-\frac1{2(h-1)}} \leq r \leq \sqrt{\frac{\log n}{h \pi}} \;\;\;\; \mbox{ where } \; \omega(n) \to \infty. \] 
Suppose that $n r^{2(h-1)} \to \infty$ and $r \leq \sqrt{\log n / (\pi h)}$. 
Then for $G \in \RG$, \whp\ $G$ has a component isomorphic to $H$.
\end{lemma}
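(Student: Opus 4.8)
The plan is to exhibit, \whp, an isolated set of $h$ points whose induced unit disk graph is exactly $H$; since $H$ is connected, such a set is a component isomorphic to $H$. I would argue in the Poissonized model $\RGuv$ and transfer back as in the proof of Lemma~\ref{lem.RGG2}. The regime $r\to 0$ is already covered by Lemma~\ref{lem.comps}(3) (with $k=h$), so I may assume throughout that $r\ge r_0$ for some fixed constant $r_0>0$; it then suffices to bound the failure probability in $\RGuv$ by $o(n^{-1/2})$.

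First I would fix a \emph{robust} realization of $H$. As $H$ is a geometric graph, it is realized by points with threshold $1$; since there are finitely many points, the largest edge-distance (which is $\le 1$) is strictly below the smallest non-edge distance (which is $>1$), so choosing a threshold strictly between them and rescaling by $r$ yields points $\bv_1,\dots,\bv_h$ with $d(\bv_i,\bv_j)<r$ for every edge $ij$ and $d(\bv_i,\bv_j)>r$ for every non-edge, all inequalities strict. Strictness provides a constant $\eps>0$ so that perturbing each $\bv_i$ within a ball of radius $\eps r/2$ preserves all adjacencies and non-adjacencies. The set that must contain no further point, in order that $\{\bv_1,\dots,\bv_h\}$ be a whole component, is the forbidden region $B=\bigcup_i D(\bv_i,r)$, the union of the closed radius-$r$ disks about the $\bv_i$; write its area as $A_H\,r^2$, where $A_H$ is a scale-invariant constant depending only on $H$.

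Next I would set up the cell count. Tile the interior of $\SR$ with $M=\Theta(n/r^2)$ disjoint square cells of side $Lr$, where $L$ is a large constant chosen so that a translate of the configuration centred in a cell has its entire forbidden region inside that cell; then the relevant events in distinct cells are independent in $\RGuv$. Placing one Poisson point in each of the $h$ small balls of area $b=\pi(\eps r/2)^2=\Theta(\eps^2 r^2)$ and no point in the rest of $B$ occurs with probability
\[ q = \big(b\,e^{-b}\big)^{h}\, e^{-(A_H r^2 - hb)} = b^{h}\, e^{-A_H r^2} = \Theta\!\big(\eps^{2h} r^{2h} e^{-A_H r^2}\big), \]
and such an event produces an isolated copy of $H$. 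Hence the failure probability is at most $(1-q)^M\le e^{-Mq}$ with $Mq=\Theta\big(\eps^{2h}\,n r^{2(h-1)} e^{-A_H r^2}\big)$, and everything reduces to showing that $Mq\to\infty$ quickly.

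The crux — and the step I expect to be the main obstacle — is to show that connectivity of $H$ forces $A_H<\pi h$ strictly, with a definite gap. Ordering the $\bv_i$ along a spanning tree of $H$ and adding the disks one at a time, each disk after the first overlaps the disk at its tree-parent (whose centre is within distance $r$) in a lens of area at least a positive absolute constant times $r^2$, so $A_H\le \pi h - c_0(h-1)$ for an absolute constant $c_0>0$, giving $A_H/(\pi h)=1-c$ for some $c=c(h)>0$. This is exactly where the threshold enters: since $r\le\sqrt{\log n/(\pi h)}$ we have $\pi h\,r^2\le\log n$, whence $e^{-A_H r^2}\ge n^{-A_H/(\pi h)}=n^{-1+c}$ and $Mq=\Omega\big(\eps^{2h}\,r^{2(h-1)} n^{c}\big)$. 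For $r\ge 1$ this is $\Omega(n^c)\to\infty$, and for $r_0\le r\le 1$ it is $\Theta(n)$; in both cases $e^{-Mq}=o(n^{-1/2})$, and de-Poissonizing as in Lemma~\ref{lem.RGG2} gives the conclusion \whp\ in $\RG$. (Boundary cells number only $O(\sqrt{n}/r)$ and are routine; alternatively one may replace the Poissonization step by a second-moment argument in $\RG$, which dispenses with the $o(n^{-1/2})$ requirement at the cost of a covariance estimate.) The conceptual point is that $\pi h$ is precisely the forbidden area produced by $h$ \emph{disjoint} disks, and connectivity of $H$ supplies the slack below $\pi h$ that lets $r$ run all the way up to $\sqrt{\log n/(\pi h)}$.
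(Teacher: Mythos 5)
Your proposal follows essentially the same route as the paper's proof: de-Poissonization via an $o(n^{-1/2})$ failure bound in $\RGuv$, Lemma~\ref{lem.comps}(3) to dispose of the small-$r$ regime, a fixed realization of $H$ with strict edge/non-edge inequalities and small perturbation balls, independence over $\Theta(n/r^2)$ disjoint cells, a success probability of the form $\lambda^h e^{-(\text{forbidden area})}$, and the key observation that the forbidden area is strictly below $\pi h r^2$, which together with $r \le \sqrt{\log n/(\pi h)}$ yields a polynomial factor $n^{c}$ in the exponent. Two remarks on the comparison. First, the paper obtains the strict inequality simply from the fact that $H$ has at least one edge (two unit disks with centres at distance less than $1$ overlap, so $A_1 < \pi h$); your spanning-tree lens count gives the stronger quantitative bound $A_H \le \pi h - c_0(h-1)$, which is correct (adding the disks one at a time along the tree, each new disk loses a lens of area at least an absolute constant times $r^2$ to its parent) but more than is needed. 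Second, your reduction ``the regime $r \to 0$ is covered, so assume $r \ge r_0$'' is not a clean dichotomy as stated; it needs a subsequence argument, or the paper's cleaner device of assuming $r \ge 1/\log n$, which loses only harmless logarithmic factors in $Mq$.

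There is, however, one step that fails as written: the forbidden region $B=\bigcup_i D(\bv_i,r)$ is too small. Your Poisson points $\y_i$ lie in the balls $B(\bv_i,\eps r/2)$, so a point $\z \notin B$ with $d(\z,\bv_i)$ slightly larger than $r$ can still satisfy $d(\z,\y_i)<r$ and attach itself to the configuration; hence emptiness of $B$ does not make the copy of $H$ a component, and the displayed formula for $q$ bounds the probability of an event that does not imply the conclusion. The repair is exactly the paper's distinction between its sets $C_1$ and $C$: one must forbid points in the enlarged region $\bigcup_i D(\bv_i,(1+\eps/2)r)$ (radius $1+\eta$ before rescaling), whose area exceeds $A_H r^2$ by a factor tending to $1$ as $\eps \to 0$. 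Your definite gap $A_H \le \pi h - c_0(h-1)$ is precisely what licenses this: choose $\eps$ small enough that the enlarged area is still at most $\bigl(\pi h - \tfrac{c_0}{2}(h-1)\bigr)r^2$, and then your estimate $Mq = \Omega\bigl(\eps^{2h} r^{2(h-1)} n^{c}\bigr)$ goes through unchanged. With that correction (and the subsequence point above), the argument is sound and matches the paper's.
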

\begin{proof}
If $nr^{2(h-1)} \to \infty$ and $r \to 0$ then \whp\ $G$ has a component $H$ by Lemma~\ref{lem.comps}.  To handle larger values of $r$, we now work in the Poisson model $\RGuv$.  Assume from now on that $r \geq 1/\log n$ say (and still $r \leq \sqrt{\log n / (\pi h)}$). 
Fix distinct points $\x_1,\ldots,\x_h$ such that, for each distinct $i$ and $j$, $d(\x_i, \x_j)<1$ if $ij \in E(H)$ and $d(\x_i, \x_j)>1$ if $ij \not\in E(H)$.  Thus the unit disk graph generated by these points is $H$.  Let $\alpha = \max \{ d(\x_i, \x_j): ij \in E(H)\}$, let $\beta= \min \{ d(\x_i, \x_j): ij \not\in E(H)\}$, and let $\gamma= \min\{ d(\x_i, \x_j): i \neq j\}$.  Let $0 < \eta \leq \frac12 \min \{1-\alpha, \beta-1, \gamma\}$. Put a small open ball $B(\x_i,\eta)$ of radius $\eta$ around each point $\x_i$.
%  \dm{By choosing $\eta$ sufficiently small, we may (and will) assume that 
Observe that these balls are pairwise disjoint, and if $\y_i \in B(\x_i, \eta)$ for each $i$ then $\y_1,\ldots,\y_h$ yield the same geometric graph~$H$.
 
Let $C_1$ be the set of points within distance 1 of the points $\x_i$ (so $C_1$ is the union of the balls $B(\x_i,1)$), 
%\dmc{First, it should not be the same index $i$, no? But, moreover, I would not put any index $j$ here. I suggest: Let $C^*$ be the union of the balls $B(\x_i,1)$, and let $A^*$ be the area of $C^*$. Observe that $A^*<h \pi$ since $h \geq 2$, $H$ has an edge and $\alpha < 1$.} of the balls $B(\x_i,1)$,
%set of points within distance $1$ of the $x_i$, 
and let $A_1$ be the area of $C_1$.  Observe that $A_1< \pi h$ since $h \geq 2$ and $H$ has an edge.
Let $C$ be the set of points within distance $1+\eta$ of the $\x_i$, and let $C$ have area $A$.
%Observe that $A < h \pi (1+\eta)^2$.  
Let $b = (\pi h)^{-1/2}$.  If $\eta$ is chosen sufficiently small then $b^2 A<1$; assume we have done this.

 If each ball $B(\x_i,\eta)$ contains exactly one Poisson point and there are no other such points in $C$, then we have a copy of $H$ forming a component of $G$. 
Now scale by $r$, note that we can pack $\Theta(n/r^2)$ disjoint copies of the configuration in $\mathcal S_n$, and we may argue as in the proof of Lemma~\ref{lem.RGG2}, as follows.

Set $\lambda = \pi (\eta r)^2$.  Let $q_n$ be the probability that each small ball contains exactly one Poisson point and there are no such points where they should not be.  Then
\[ q_n = (\lambda e^{-\lambda})^h e^{-r^2 (A - h \pi \eta^2)} = \lambda^h e^{-r^2 A}.\]
Since events within different cells are independent, for some constant $c>0$ the probability $p_n$ that $G \in \RGuv$ has no component $H$ satisfies
\[p_n \leq (1-q_n)^{\frac{cn}{r^2}} \le \exp\left(-\frac{c n q_n}{r^2} \right). \] 
%\dmc{I put new calculations, I don't know why there were 2 cases.}
Now, for $r \le b\sqrt{\log n}$, we have $r^2 A \le b^2 A \log n$,  and so
\[
 q_n n /r^2 = \lambda^h e^{-r^2 A} n/r^2 = \Omega(n^{1-b^2 A} r^{2h-2})= \Omega(n^{1-b^2 A} (\log n)^{-(2h-2)}).
\] 
%where the last equality follows from the fact that we may assume already that $r \ge \varepsilon$ for some $\varepsilon > 0$ (the case $r \to 0$ was dealt with in the beginning).
Thus, since $b^2 A <1$, we have $p_n=o(n^{-\frac12})$.  It follows that the failure probability in the original $\RG$ model is $o(1)$, as required. 
\end{proof}

%%%%%%%%%%%%%%%%%%%%%%%%%%%%%%%%%%
%%%%%%%%%%%%%%%%%%%%%%%%%%%%%%%%%%

\section{Concluding Remarks} \label{sec.concl}

Let us pick up a few points for further thought.
%\m{\CC some of these questions are for us at this stage}
\smallskip 

\begin{enumerate}
%\begin{itemize}
\item  Recall that $\chi_c^{\max}(\R^2)$ is the maximum value of $\chi_c(G)$ over geometric graphs $G$ in the plane, and we saw that $3 \leq \chi_c^{\max}(\R^2) \leq 9$. Can we improve either bound?

Observe that if a geometric graph $G$ is triangle-free then $G$ is planar (if in the embedding of a geometric graph two edges cross, then this induces a triangle in $G$, see for example~\cite{Breu}) and so $\chi_c(G) \leq \chi(G) \leq 3$ by Gr\"{o}tzsch's theorem. We saw in Lemma~\ref{lem.detsmall} that $\chi_c(G) \leq 3$ for all graphs with at most 10 vertices. The Gr\"{o}tzsch graph showed that this bound does not extend to $n=11$ (see also equation~(\ref{eqn.chicmax4}), and point (4) below). But the Gr\"{o}tzsch graph is not a geometric graph in the plane, 
%\m{\CC It seems we do not know if it is not geometric in $\R^3$?}
so perhaps the upper bound 3 extends to larger values $n$ when we restrict our attention to geometric graphs? Any extension for geometric graphs would lead to an improvement in Proposition~\ref{prop.RGG4} Part~(1). If it turns out that $\chi_c^{\max}(\R^2)=3$, then Theorem~\ref{thm.RGGsummary} is tighter than it currently seems, and Proposition~\ref{prop.RGG4} is redundant. If $\chi_c^{\max}(\R^2) > 3$ then it would be interesting to refine Part~(5) of Theorem~\ref{thm.RGGsummary}.

\smallskip

\item
More generally, can we say more about $\chi_c^{\max}(\R^d)$?
% beyond the fact that it goes to infinity with $d$? 
We saw in Proposition~\ref{thm.detR3} that $\chi_c^{\max}(\R^3) \leq 21$: can we improve this upper bound?  Can we find a geometric graph in $\R^3$ with $\chi_c(G)>3$? 
%\m{Can we find a geometric graph in $\R^3$ with $\chi_c(G)>3$? - we should try!}  
We have seen that  $\chi_c^{\max}(\R^d)$ is at most $2e^{2\sqrt{d}} \, d^{d/2}$ and is $\Omega(\sqrt{d/ \log d})$ as $d \to \infty$.  Can we improve these bounds?

\textit{Remark:} After submission of this paper the upper bound on $\chi_c^{\max}(\R^d)$ was improved in~\cite{Fox} to $2^{O(d)}$.
\smallskip

\item
 In the light of the last two parts of Theorem~\ref{thm.RGGsummary} (and Proposition~\ref{prop.RGG4}), it is natural to ask if there is a constant $\rho$, where $0.46 \leq \rho < 9.27$, 
%\m{D: As before, how many decimal places?}, 
such that for $G \in \RG$ and any $\varepsilon>0$, we have \whp 
%\m{Dieter, 1 was just a handy value - I have now put $n^{-1/8} \ll r$}
\[ \chi_c(G)  \begin{cases}
\geq 3 & \mbox{ if } \; n^{-1/8} \ll r \leq (\rho - \varepsilon) \sqrt{\log n}\\
= 2 & \mbox{ if }  \; r \geq (\rho + \varepsilon) \sqrt{\log n}.
\end{cases} \]
\smallskip

\item
%\m{\CC which paper should $\chi_c^{\max}(n)$ go in?  The $G(n,p)$ paper looks like a more natural home? }
%\m{\pt{P: true, but the other paper is a bit long. Here we talk about maximum values so it fits here too I think. I would leave it here.}}
%Bacs\'{o} et al.~\cite{BGGPS2004} noted that $\chi_c(G)$ is unbounded even for line graphs of complete graphs. 
 Recall that $\chi_c^{\max}(n)$ is the maximum value of $\chi_c(G)$ over all $n$-vertex graphs.  Trivially $\chi_c^{\max}(1)=1$.  We saw in Lemma~\ref{lem.detsmall} and equation~(\ref{eqn.chicmax4}) that
\[  \chi_c^{\max}(n) =
\begin{cases}
2 & \mbox{ if } n=2, 3, 4\\
3 & \mbox{ if } n=5,\ldots, 10\\
4 & \mbox{ if } n=11,\ldots,16
\end{cases}
\]
What about larger values of $n$?

Now consider asymptotic behaviour. We saw in equation~(\ref{eqn.chicmaxn}) that $\chi_c^{\max}(n) = \Omega(\sqrt{n/\log n})$.
On the other hand, we claim that
\begin{equation} \label{eqn.chicmaxub} 
\chi_c^{\max}(n) \le 2\sqrt{n}.
\end{equation}
We may see this as follows.  \emph{Repeatedly}, pick greedily  a maximal independent set, give all the vertices in the set 
%assign all vertices of the maximal independent set found with 
the same fresh colour and remove them, \emph{until} we find a maximal independent set $I$ of size less than $\sqrt{n}$.  Such a set $I$ is a dominating set in the remaining graph $H$, so $\chi_c(H) \leq |I|+1$, see~\cite{BGGPS2004,mmp-gnp}. Thus if $H$ has $h$ vertices, then at most $\min\{ |I|+1, h \}$ further colours are needed.

In the first phase we use at most  $(n-h)/\sqrt{n}= \sqrt{n}- h/\sqrt{n}$ colours.  If $h \geq \sqrt{n}$ then we use at most $(\sqrt{n}-1)+(|I|+1) < 2 \sqrt{n}$ colours in total.  If $h < \sqrt{n}$ then we use at most $\sqrt{n} + h < 2 \sqrt{n}$ colours, and hence $\chi_c^{\max}(n) < 2\sqrt{n}$.  This proves the claim~(\ref{eqn.chicmaxub}).

We know that $\chi_c^{\max}(n)$ is $\Omega(\sqrt{n/\log n})$ and $O(\sqrt{n})$. Can we say more about the asymptotic behaviour of $\chi_c^{\max}(n)$?  See also~\cite{ErdGT}, and Problem~1 there in particular.

Is it true that for each $n$, $
\chi_c^{\max}(n)$ %(the maximum value of $\chi_c$ over all $n$-vertex graphs) 
is achieved by a triangle-free $n$-vertex graph?  Indeed, could it even be the case that every graph has a triangle-free subgraph with at least the same value of~$\chi_c$? 
\smallskip

\item Our upper bound on $\chi_c(G)$ gives an upper bound on the \emph{clique transversal number} $\tau_c(G)$, which is defined to be the minimum size of a set $S$ of vertices which meets all maximal cliques (ignoring isolated vertices).  For each $n$-vertex graph $G$, since the maximum size of a set of vertices containing no maximal clique is at least $n/\chi_c(G)$, we have
\[ \tau_c(G) \leq n - n/\chi_c(G). \]
The result noted above that $\chi_c(G) \leq 2 \sqrt{n}$ yields $\tau_c(G) \le n-\frac12 \sqrt{n}$, which may be compared with the best known bound $\tau_c(G) \le n-\sqrt{2n}+\sqrt{2}$ (see~\cite{ErdGT}). It is not likely to be easy to improve our upper bound by say a factor $4$ to $\chi_c(G) \leq (1/2) \sqrt{n}$, since that would strictly improve the upper bound on $\tau_c$ (to $\tau_c(G) \le n-2\sqrt{n}$).
%a factor $2 \sqrt{2}$ to $\chi_c(G) \leq \sqrt{n/2}$, since that would strictly improve the upper bound on $\tau_c$ (to $\tau_c(G) \le n-\sqrt{2n}$).}
\smallskip

\item
% Consider again high dimensions.
Finally, consider the number of dimensions we need to embed a graph.
Let $d^*(n)$ be the least value $d$ such that every graph with $n$ vertices is geometric in $\R^d$.  Then $d^*(n) \leq n-1$ %\binom{n}{2}$ 
by Proposition~\ref{prop.allgeom}. %Also, recall from~(\ref{eqn.chicmaxn}) and~(\ref{eqn.bigd}) that $\chi_c^{\max}(\R^d)=O(d^{d/2})$ and $\chi_c^{\max}(n) = \Omega(\sqrt{n/\log n})$.  It follows that \m{needs proof}
We claim that
\begin{equation} \label{eqn.dstar}
  d^*(n) = \Omega(\log n/\log \log n). % \;\; \mbox{ for sufficiently large } n.
\end{equation}
For, let $\eps>0$, and let $f(n)=(1-\eps) \log n / \log\log n$ for $n \geq 3$.  Suppose that $d^*(n) \leq f(n)$ for arbitrarily large values $n$.  We shall obtain a contradiction.

Note first that $f(n)$ is increasing for $n \geq 16$.
Define $m=m(d) = \lceil d^{(1+\eps)d} \rceil$.  Clearly $m(d) \geq 16$ for $d \geq 3$.  Now $f(m(d+1)) \sim (1-\eps^2) d$ as $d \to \infty$: hence, for some constant $d_0 \geq 3$ we have $f(m(d+1)) \leq d$ for each $d \geq d_0$.

Let $d_1 \geq d_0$ be arbitrarily large.  There exists $d \geq d_1$ such that $m(d) \leq n < m(d+1)$ for some $n$ with 
$d^*(n) \leq f(n)$.  Now
\[ d^*(m(d)) \leq d^*(n) \leq f(n) \leq f(m(d+1)) \leq d,\]
and so
\[ \chi_c^{\max}(\R^d) \geq \chi_c^{\max}(m(d)).\]
But by~(\ref{eqn.chicmaxn}), for some constant $c>0$ we have
\[\chi_c^{\max}(n) \geq c \sqrt{n/\log n} \;\; \mbox{ for each } n \geq 3.\]
Hence, 
 \[ \chi_c^{\max}(\R^d) \geq %\chi_c^{\max}(m(d)) \geq 
 c \, d^{(1+\eps)d/2} (\log(m(d))^{-1/2} \gg d^{(1+\eps/2)d/2}.\]
But this contradicts the upper bound on $\chi_c^{\max}(\R^d)$
% = O(d^{d/2})$ \m{D: Again the term from before is missing, but does not cause problems} by 
in Proposition~\ref{thm.detRd}, and so we have established the claim~(\ref{eqn.dstar}).

Now we know that $d^*(n) = \Omega(\log n/\log \log n)$ and $d^*(n) \leq n-1$.  Our bounds are wide apart. What more can be said about $d^*(n)$?
%\m{\CC perhaps there is already work on $d^*(n)$?} 

%\smallskip
%
%\item
%{\CC Can we say anything interesting about the natural list colouring version of clique colouring?  Perhaps we already have enough points here, and best not to mention this at present? (Think about it for a possible later paper?)}\m{D: I have not thought about this, up to you}
\bigskip

\noindent
{\bf Acknowledgements}

Thanks to Mike Saks, Lena Yuditsky and Shira Zerbib for helpful discussions, and thanks to an anonymous reviewer %contributor 
whose comments much improved the paper.

\end{enumerate}
%\end{itemize}

%%%%%%%%%%%%%%%%%%%%%%%%%%%%%%
%%%%%%%%%%%%%%%%%%%%%%%%%%%%%%

\end{document}